\documentclass[10pt]{amsart}
\usepackage{amsmath,amscd}
\usepackage{amsbsy}
\usepackage{amssymb}
\usepackage{amscd,amsthm}
\usepackage[all,cmtip]{xy}
\usepackage[russian,english]{babel}
\usepackage[utf8]{inputenc}
\usepackage{xcolor}
\usepackage{dsfont}

\usepackage{hyperref}

\renewcommand{\epsilon}{\varepsilon}

\newtheorem{theorem}{Theorem}

\renewcommand{\ell}{x}

\newtheorem{thm}{Theorem}\numberwithin{thm}{section}
\newtheorem{lem}[thm]{Lemma}
\newtheorem{prop}[thm]{Proposition}

\newtheorem*{con2}{Conjecture}

\newtheorem*{que2}{Question}

\newtheorem*{rema2}{Remark}

\begin{document}
	\begin{center}
		\huge{Almost perfect inhomogeneous powers in arithmetic progression}\\[1cm] 
	\end{center}
	\begin{center}
		\large{Sa$\mathrm{\check{s}}$a Novakovi$\mathrm{\acute{c}}$}\\[0,5cm]
		{\small June 2026}\\[0,2cm]
	\end{center}
		{\small \textbf{Abstract}. 
			Let $S$ be a finite set of primes and write $\mathbb{Z}_S$ for the set of those non-zero integers whose prime divisors belong to $S$. 
			Hajdu proved that the abc conjecture implies that the number of terms of any arithmetic progression in $H_S=\{\eta x^l\mid \eta\in \mathbb{Z}_S, x,l\in \mathbb{Z},\ \textnormal{with}\ x>0 \ \textnormal{and} \ l\geq 2 \}$ is bounded. Moreover, if $k\geq 3$ and the exponents of the powers are all $\geq 4$, then the number of such progressions are finite. We consider other sets and prove  similar statements for these sets.}
		\begin{center}
			\tableofcontents
		\end{center}
		\section{Introduction}
		\noindent
	Arithmetic progressions consisting of almost perfect powers are widely studied in the literature, in particular in the homogeneous case. By almost perfect homogeneous we mean arithmetic progressions of the form $a_0x_0^l,...,a_{k-1}x_{k-1}^l$, with $a_i,x_i\in \mathbb{Z}$ and $0\leq i\leq k-1$, where $l\geq 2$ is fixed. Usually, the coefficients $a_i$ are restricted, for instance in the sense that $P(a_i)\leq k-1$. Here $P(m)$ denotes the largest prime factor of $|m|$. Hajdu \cite{HAY} considered arithmetic progressions consisting of so called almost perfect \emph{inhomogeneous} powers. Let $S=\{p_1,...,p_s\}$ be any set of primes with $p_1<p_2<\cdots <p_s$ and write $\mathbb{Z}_S$ for the set of those non-zero integers whose prime divisors belong to $S$. Put
	$$
	H_S=\{\eta x^l\mid \eta\in \mathbb{Z}_S, x,l\in \mathbb{Z},\ \textnormal{with}\ x>0 \ \textnormal{and} \ l\geq 2 \}.
	$$
	Furthermore, it is assumed that $\eta$ is $l$-th power free. Then Hajdu proved the following result.
	\begin{theorem}[\cite{HAY}, Theorem 1]
		Suppose the abc conjecture holds. Let $h_0,...,h_{k-1}$ by any non-constant arithmetic progression in $H_S$, with $h_i=\eta_ix_i^{l_i}\quad (0\leq i\leq k-1)$ such that $\mathrm{gcd}(h_0,h_1)\leq c_0$ for some positive constant $c_0$. Then we have $\mathrm{max}(k,l)<c_1$, where $l=\underset{0\leq i\leq k-1}{\mathrm{max}}l_i$. Moreover, the number of such progressions with $k\geq 3$ and $l_i\geq 4$ is bounded by some positive constant $c_2$.
	\end{theorem}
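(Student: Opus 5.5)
The plan is to reduce everything to the abc conjecture applied to three-term relations among terms of the progression. Write $h_i=h_0+id$ with $d\neq 0$, and put $g=\gcd(h_0,h_1)=\gcd(h_0,d)\le c_0$. Any three terms satisfy a linear relation with small coefficients. For example,
\[
(j-i)h_k-(k-i)h_j+(k-j)h_i=0 \qquad\text{for } i<j<k,
\]
and in particular $h_{i}+h_{i+2}=2h_{i+1}$. After dividing by the common gcd, which divides $2g\cdot(\text{small factors})$ and is therefore bounded in terms of $c_0$ and $k$, this becomes a coprime equation $A+B=C$.

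By abc, for each $\epsilon>0$,
\[
\max(|A|,|B|,|C|)\ll_\epsilon \mathrm{rad}(ABC)^{1+\epsilon}.
\]
For $h_i=\eta_ix_i^{l_i}$ we have $\mathrm{rad}(h_i)\le \mathrm{rad}(\eta_i)\,x_i\le P_S\, |h_i|^{1/l_i}$, where $P_S=p_1\cdots p_s$. Here I use that $\eta_i$ is $l_i$-th power free, so $|\eta_i|\le P_S^{\,l_i-1}$; hence $x_i\le |h_i|^{1/l_i}$, and $\eta_i$ contributes only $P_S$ to the radical.

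\textbf{Bounding $l$.} Let $H=\max|h_i|$ over the three chosen terms, and note that the three terms are all comparable to $H$ up to factors of $k$ unless they are near zero, which I handle by choosing the triple away from the sign change. Suppose term $i$ has the large exponent $l_i=l$ and the other two have exponents $\ge 2$. Then abc gives
\[
H\ll H^{(1/l+1/2+1/2)(1+\epsilon)}.
\]
This does not immediately bound $l$, so I need more terms. With four or more terms I will pick combinations that pair the high-exponent term with others. The cleaner approach, following Hajdu, is to first bound $k$ and then $l$.

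\textbf{Bounding $k$.} The key extra input is that among $k$ consecutive terms, those with $l_i=2$ or $3$ are constrained. I would use the known result (Darmon--Merel and Euler type, or abc itself) that there are few squares or cubes in a progression relative to the $\eta$-twists. Since $\eta_i$ ranges over a finite set modulo squares and cubes, by pigeonhole many terms share the same $\eta$ class modulo squares. Then Szemerédi-type or Bombieri--Zannier type bounds, together with abc applied to triples $\eta x^2$, give a contradiction for $k$ large. Concretely, take $i<j<k$ with $l_i,l_j,l_k$ all large, or mix them, so that $\sum 1/l<1$; abc then bounds $H$, hence the terms, hence $k$ and $l$.

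\textbf{Main obstacle.} The main obstacle is the case where most exponents equal $2$ or $3$, since then $\sum 1/l_i\ge 1$ and abc gives nothing. I would handle it by showing that only boundedly many terms can have exponent $\le 3$. For exponent $2$ with a fixed class of $\eta$, four terms in a progression give a curve of genus $1$ that has finitely many points; together with abc uniformity, this bounds how many terms lie in any one class. With all $l_i\ge 4$, any triple has $\sum 1/l_i\le 3/4<1$, so abc bounds $H$ absolutely. This gives finitely many $h_0,d$, hence the constant $c_2$.
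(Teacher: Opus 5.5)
Your argument for the ``moreover'' part is correct and is the standard one. Apply abc to $h_0+h_2=2h_1$ after removing a common factor dividing $2\gcd(h_0,h_1)$, and use $x_i\le |h_i|^{1/l_i}\le H^{1/l_i}$; there is no need to avoid the sign change. Then $\sum 1/l_i\le 3/4$ gives $H\ll H^{\frac34(1+\epsilon)}$.

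\textbf{The gap in the first assertion.} The paper only quotes this theorem from Hajdu. The route it indicates (Hajdu's Lemma 4 on $l$-smoothness, and the argument replayed in its Lemmas 3.1, 4.1 and Proposition 2.3) runs in the opposite order to yours: exponents first, then length.
\begin{itemize}
\item[(a)] Every $l$-homogeneous progression in $H_S$ has length $<c(l)$.
\item[(b)] Suppose $k>2c(2)$ and $l_i\ge 7$. Runs of exponent-$2$ terms are shorter than $c(2)$, so there is $j$ with $0<|i-j|\le c(2)$ and $l_j\ge 3$, and some $t$ with $|i-t|\le 2$. The relation $\lambda_ih_i+\lambda_jh_j+\lambda_th_t=0$ then has coefficients and common divisor bounded independently of $k$, and $1/l_i+1/l_j+1/l_t\le 41/42$. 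So abc bounds $|h_i|$, hence $l_i$.
\item[(c)] With all $l_i\le L$, colour the terms by exponent and use van der Waerden together with (a) to bound $k$.
\end{itemize}

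Your order cannot work. Any colouring or density argument (van der Waerden, Szemer\'edi, Bombieri--Zannier) needs boundedly many classes. The pair $(l_i,\eta_i)$ takes boundedly many values only once the $l_i$ are bounded, and classes modulo squares or cubes say nothing about terms whose exponent is a large prime.

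Your closing step, ``abc bounds $H$, hence the terms, hence $k$ and $l$'', bounds only the chosen triple (and $d$). That does not by itself bound $k$. More importantly, you never produce a triple with $\sum 1/l<1$ and boundedly spaced indices; step (b) is precisely the missing idea that does this.

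Your observation that exponents $(l,2,2)$ give nothing is a genuine obstruction, not a technical one. For odd $l$, write $(1+2\sqrt{2})^l=z+y\sqrt{2}$; then $2y^2-z^2=7^l$ with $7\nmid y$. So $7^l,y^2,z^2$ (e.g.\ $343,484,625$) is a $3$-term progression with $\gcd(h_0,h_1)=1$ and unbounded $l$. The bound on $l$ can only hold once $k$ passes a threshold like $2c(2)$, which is exactly what (b) uses.

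\textbf{The small-exponent step.} This is where (a) lives, and it also fails as written. Four same-class terms $\eta x^2$ at arbitrary positions do cut out a genus-$1$ curve. But genus $1$ does not imply finitely many rational points, since these curves can have positive rank. Even finiteness for each configuration of positions would not give a uniform bound.

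The facts actually used are Euler's theorem that four squares in arithmetic progression (equally spaced) are equal, and Darmon--Merel's theorem that $x^n+y^n=2z^n$, $n\ge 3$, has only trivial solutions. Pigeonhole gives many terms with the same $\eta$, but not an equally spaced sub-progression of them. For that, colour an $l$-homogeneous progression by $\eta_i$ (at most $2l^s$ colours, since $\eta_i$ is $l$-th power free). Van der Waerden then gives a monochromatic $4$-term ($l=2$) or $3$-term ($l\ge 3$) sub-progression, contradicting Euler or Darmon--Merel.

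You also do not need the claim that only boundedly many terms have exponent $\le 3$, and density results cannot give it. Bombieri--Zannier, applied to $(\eta x_i)^2=\eta h_0+i\eta d$, bounds the number of exponent-$2$ terms by $o(k)$, not $O(1)$. All that (b) needs is that runs of consecutive exponent-$2$ terms are short. Finally, ``abc applied to triples $\eta x^2$'' contributes nothing, since there $\sum 1/l=3/2$.
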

	At this point we want to mention that Theorem 1 is related to the equation
	\begin{equation}
	n(n+d)\cdots (n+(k-1)d)=bx^l
	\end{equation}
	in non-zero integers $n,d,b,x, k\geq 2,l\geq 2$ with $\mathrm{gcd}(n,d)=1$ and $P(b)\leq k$. This equation and several spezializations or variants have a very extensive literature. We don't want to give a complete list here but instead refer to the papers \cite{BGH}, \cite{GY}, \cite{GY2}, \cite{SS}, \cite{SHO}, \cite{SHO2} and \cite{TI} and the references therein. In this context we also refer to the introduction of \cite{BGHT}. We only mention one result due to Gy\H{o}ry, Hajdu and Saradha \cite{GY2} which may be relevant from our viewpoint. They showed that under the abc conjecture equation (1) has only finitely many solutions if $l\geq 4$ and $k\geq 3$. One can show that using equation (1) one can write
	$$
	n+id=a_ix_i^l \quad \textnormal{with} \quad P(a_i)\leq k-1.
	$$
	Thus Theorem 1 yields a kind of extension of the mentioned result in \cite{GY2}. In \cite{BGHT} Bruin, Gy\H{o}ry, Hajdu and Tengely dealt with the following problem:
	\begin{que2}
		For all $k\geq 3$ characterize the non-constant arithmetic progressions $(h_0,...,h_{k-1})$ with $\mathrm{gcd}(h_0,h_1)=1$ such that each $h_i=x_i^{l_i}$ for some $x_i\in \mathbb{Z}$ and $l_i\geq 2$.
	\end{que2}
	\noindent
	They showed that for $k\geq 4$ and $L\geq 2$, there are only finitely many $k$-term integral arithmetic progressions of the form stated in the problem with $2\leq l_i\leq L$. This result is a direct consequence of a more general result, namely:
	\begin{theorem}[\cite{BGHT}, Theorem 2]
		Let $L,k$ and $D$ be positive integers with $L\geq 2, k\geq3$, and let $S$ a finite set of primes. Then there are at most finitely many arithmetic progressions $(h_0,...,h_{k-1})$ satisfying the following conditions:
		\begin{itemize}
			\item [(i)] For $i=0,...,k-1$, there exists $x_i\in \mathbb{Z}, 2\leq l_i\leq L$ and $\eta_i\in\mathbb{Z}_S$ such that $h_i=\eta_ix_i^{l_i}$,
			\item[(ii)] $\mathrm{gcd}(h_0,h_1)\leq D$,
			\item[(iii)] either $k\geq 5$, or $k=4$ and $l_i\geq 3$ for some $i$, or $k=3$ and $\frac{1}{l_0}+\frac{1}{l_1}+\frac{1}{l_2}<1$.
		\end{itemize}
	\end{theorem}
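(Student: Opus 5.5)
The statement to prove is Theorem 2 of \cite{BGHT}: finiteness of arithmetic progressions $h_i=\eta_ix_i^{l_i}$ with bounded exponents, bounded $\gcd(h_0,h_1)$ and the size conditions (iii). The plan is to reduce, via $S$-unit and coset arguments, to finitely many systems of generalized Fermat equations. These systems are then handled by the theorem of Darmon--Granville in the hyperbolic case $\frac1p+\frac1q+\frac1r<1$, and by Faltings' theorem on auxiliary curves otherwise.

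\textbf{Normalisation.} Since $\gcd(h_0,h_1)\le D$, every $\gcd(h_i,h_j)$ divides $(j-i)\gcd(h_0,h_1)$ up to bounded factors. After dividing out a bounded common factor we may therefore assume the $h_i$ are pairwise coprime outside a finite set of primes $S'\supseteq S\cup\{p\le k\}$. Since $l_i\le L$, we may absorb $l_i$-th powers into $x_i^{l_i}$, so each $\eta_i$ ranges over the finite set $\mathbb{Z}_{S'}/\mathbb{Z}_{S'}^{l_i}$. We may also fix the tuple $(l_0,\dots,l_{k-1})$, which takes finitely many values. Thus only finitely many choices of $(\eta_i,l_i)$ remain.

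\textbf{Three-term equations.} Any three terms satisfy $(j-i)h_m-(m-i)h_j+(j-m)h_i=0$, giving
$$A x_i^{l_i}+Bx_j^{l_j}=Cx_m^{l_m}$$
with fixed coefficients and, by normalisation, $\gcd(Ax_i,Bx_j,Cx_m)$ bounded. When $\frac1{l_i}+\frac1{l_j}+\frac1{l_m}<1$, Darmon--Granville gives finitely many primitive solutions, hence finitely many progressions. This settles $k=3$ in case (iii) directly.

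\textbf{Replacing exponents for larger $k$.} For $k\ge4$ or $k\ge5$, some triples may have exponent signature $(2,2,2)$, $(2,2,l)$, $(2,3,3)$, $(2,3,4)$ or $(2,3,5)$. In that situation we use divisibility of exponents: a square is also treated as an exponent-$2$ power, and we choose any $l_i'\mid l_i$ with $l_i'\in\{2,3\}$ if needed. Four or five terms then give a system of quadrics or cubics. Concretely, for $k=5$ with all exponents $2$, the five terms $\eta_ix_i^2$ satisfy three independent linear relations. This defines a curve that is an intersection of three quadrics in $\mathbb{P}^4$, of genus $5>1$, provided it is nonsingular; the relevant discriminant is a product of Vandermonde-type differences, which is nonzero because the progression is nonconstant. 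Faltings' theorem then yields finitely many projective points, hence finitely many progressions after fixing the scaling by the gcd bound. For $k=4$ with one $l_i\ge3$, take the two quadric relations in the $x$'s. This gives a genus-one curve times an extra weighted variable, and the resulting curve $\eta_ix_i^{l_i}=$ (linear form in two squares) has genus $\ge2$. For larger $k$, or mixed exponents, restrict to a suitable subset of $4$ or $5$ terms, or to a hyperbolic triple.

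\textbf{Main obstacle.} The hardest step is the case analysis for $k=4,5$ with small exponents. There one must verify that each auxiliary curve has genus $\ge2$ and is irreducible and smooth, and that finitely many rational points give finitely many primitive progressions. I would first try to compute the genus via Riemann--Hurwitz on the cover $x\mapsto x^{l}$ of the genus-$0$ conic or line. The ramification comes from the distinct branch points $h_i=0$, which are distinct since the progression is nonconstant.
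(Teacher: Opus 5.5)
The paper quotes this theorem from \cite{BGHT} without proof, but it reproduces the \cite{BGHT} argument in the proofs of Theorems 5 and 7. Your normalisation and your $k=3$ case agree with that argument: Darmon--Granville applied to the hyperbolic three-term relation, then finitely many values of $h_1/h_0$ together with $\gcd(h_0,h_1)\le D$ give finitely many progressions.

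The gap is $k=4,5$. You leave this as a plan, and the plan does not cover everything allowed by (iii). Your list of bad signatures omits the Euclidean ones $(2,3,6)$, $(2,4,4)$, $(3,3,3)$, where Darmon--Granville gives only genus $1$. Also, a divisor $l_i'\in\{2,3\}$ of $l_i$ does not exist when $\gcd(l_i,6)=1$. So vectors such as $(2,2,3,3)$, $(2,3,3,3)$, $(2,2,3,5)$, $(3,3,3,3)$ for $k=4$, or $(2,2,3,3,3)$, $(3,3,3,3,3)$ for $k=5$, contain no hyperbolic triple. None of them falls under either of your two computations (all squares for $k=5$; three squares plus one higher power for $k=4$).

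The pure-cube vectors could be treated like your all-squares case, but you do not do this. For genuinely mixed vectors the relations are only weighted homogeneous. Then no projective curve's rational points parametrise the primitive solutions unless you first perform a descent step, and you do not provide one.

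Your $k=4$ sketch is also wrong for odd $l_i$. The three square terms satisfy one quadric relation (a conic), not two, so there is no genus-one curve. The right object is a cyclic degree-$l_i$ cover of that conic, and you would have to check that the branch quartic has distinct roots.

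The missing idea is the one you only gesture at in your last paragraph, carried out uniformly. For each $j\ge 2$, Darmon--Granville turn the solutions of $jh_1-h_j-(j-1)h_0=0$ into rational points, over a number field, on a Galois cover $C_j$ of the $u$-line with $u=h_1/h_0$. This cover is branched only over the three points where $h_0,h_1,h_j$ vanish, with indices $l_0,l_1,l_j$. A component of $C_2\times_u\cdots\times_u C_{k-1}$ is then Galois of some degree $d$ over the $u$-line, branched over $k$ distinct points with indices $l_0,\dots,l_{k-1}$. Riemann--Hurwitz gives
$$2g-2=d\Bigl(k-2-\sum_{i=0}^{k-1}\frac{1}{l_i}\Bigr),$$
which is positive for $k=5$ always and for $k=4$ unless all $l_i=2$. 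Faltings' theorem then finishes.

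This handles every exponent vector at once, with no case analysis and no smoothness checks on explicit complete intersections. Your genus-$5$ intersection of three diagonal quadrics is a correct special case. Its smoothness comes from the index differences $j-i\neq0$, not from the progression being nonconstant.
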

	\noindent
For a non-zero integer $y$, let $N(y)$ be the \emph{algebraic radical}, namely $N(y)=\underset{p\mid y}{\prod}{p}$. In the present note we are interessted in characterizing non-constant arithmetic progressions in other sets then $H_S$. So instead of $H_S$, we consider the sets
$$
G_{\alpha}=\{\eta x^l\mid \eta\in \mathbb{Z}, x,l\in \mathbb{Z},\  \textnormal{with}\ x>0, l\geq 2 \ \textnormal{and} \ N(\eta)\leq \alpha\},
$$
where $\alpha>0$ is a fixed integer, 
$$
G_{\delta}=\{\eta x^l\mid \eta\in \mathbb{Z}, x,l\in \mathbb{Z},\ \textnormal{with}\ x>0, l\geq 2 \ \textnormal{and} \ N(\eta)\leq x^{\delta}\},
$$
where $\delta$ is a fixed positive number $<1/1763$,
$$
	G_{!}=\{l! x^l\mid x,l\in \mathbb{Z},\ \textnormal{with}\ x>0, l\geq 2 \}
$$
and
$$
G_f=\{f(l)x^l\mid x,l\in \mathbb{Z},\ \textnormal{with}\ x>0, l\geq 2 \},
$$
where $f:\mathbb{Z}\rightarrow \mathbb{Z}$ is a non-zero function. Moreover, we may assume $\eta$ to be $l$-th power free. It is easy to see that $G_{\alpha}\subset H_S$ for a suitable finite set $S$ of primes. Therefore, Theorem 3 is a direct consequence of Theorem 1. Throughout the note, we say a non-constant arithmetic progression $h_0,...,h_{k-1}$ is \emph{$l$-homogeneous} if $h_i=\eta_ix_i^l$ for a fixed $l\geq 2$ and $\eta_i\in \mathbb{Z}$, $\eta_i\neq 0$. If moreover any arithmetic progression in a family of $l$-homogeneous  progessions satisfies $k<c(l)$ for a positive constant $c(l)$, depending only on $l$, we say the family of $l$-homogeneous progressions is $l$-\emph{smooth}. For example the family of $l$-homogeneous progressions in the set $H_S$ is $l$-smooth for any $l\geq 2$, as proved by Hajdu (see \cite{HAY}, Lemma 4). Notice that the proof needs results from Ramsey theory and contributions of Darmon and Merel \cite{DM} on the Fermat equation. So this is highly non-trivial.
	Since we mentioned the abc conjecture and since we also need it to prove Theorems 4, 5 and 6 below, we recall its statement. 
		\begin{con2}[classical abc conjecture]
			For any $\epsilon >0$ there is a constant $K(\epsilon)$ depending only on $\epsilon$ such that whenever $a,b$ and $c$  are three coprime and non-zero integers with $a+b=c$, then 
			\begin{eqnarray*}
				c<K(\epsilon)N(abc)^{1+\epsilon}
			\end{eqnarray*}
			holds.
		\end{con2} 
	\begin{theorem}
		Suppose that the abc conjecture holds. Fix a positive integer $\alpha$ and let $h_0,...,h_{k-1}$ by any non-constant arithmetic progression in $G_{\alpha}$, with $h_i=\eta_ix_i^{l_i}\quad (0\leq i\leq k-1)$ such that $\mathrm{gcd}(h_0,h_1)\leq c_0$ for some positive constant $c_0$. Then we have $\mathrm{max}(k,l)<c_1$, where $l=\underset{0\leq i\leq k-1}{\mathrm{max}}l_i$. Moreover, the number of such progressions with $l_i\geq 4$ is bounded.
	\end{theorem}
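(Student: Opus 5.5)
The plan is to deduce the statement directly from Theorem 1 (Hajdu), by embedding $G_{\alpha}$ into a suitable $H_S$. Let $S=\{p \text{ prime} : p\leq \alpha\}$, which is a finite set of primes depending only on $\alpha$. Take $\eta x^l\in G_{\alpha}$. Every prime $p\mid \eta$ divides $N(\eta)$, so $p\leq N(\eta)\leq\alpha$ and hence $p\in S$. Since $\eta\neq 0$, this gives $\eta\in\mathbb{Z}_S$ and therefore $\eta x^l\in H_S$. Thus $G_{\alpha}\subset H_S$.

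Next I check that the normalization used in $H_S$ is compatible with the one in $G_{\alpha}$. In $H_S$ the coefficient $\eta$ is assumed $l$-th power free. If $\eta=\eta' y^l$ with $\eta'$ $l$-th power free, then $\eta x^l=\eta'(xy)^l$. Here $N(\eta')\mid N(\eta)$, so $N(\eta')\leq\alpha$, and the prime divisors of $\eta'$ still lie in $S$. Rewriting an element in this way leaves the exponent $l$ unchanged. Consequently a representation $h_i=\eta_ix_i^{l_i}$ in $G_{\alpha}$ yields a representation of $h_i$ in $H_S$ with the same exponents $l_i$, so the quantity $l=\max_i l_i$ is the same in both settings. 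The hypothesis $\gcd(h_0,h_1)\leq c_0$ is a condition on the $h_i$ alone and carries over verbatim.

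Theorem 1 applied to the progression $h_0,\dots,h_{k-1}$, now viewed in $H_S$, gives $\max(k,l)<c_1$. It also bounds the number of progressions with $l_i\geq 4$ by $c_2$. Both constants depend only on $S$ (hence only on $\alpha$) and on $c_0$, which is exactly the uniformity the statement asks for.

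The only point needing care is the finiteness part. Theorem 1 requires $k\geq 3$ there, and this condition must be read into the statement. For $k=2$ any two distinct elements of $G_{\alpha}$ form a non-constant progression. Already with $\eta=1$ and coprime fourth powers $x^4,y^4$ one gets infinitely many such pairs with $\gcd=1$. So the count is bounded only for $k\geq 3$, and with that understood the argument is a direct inclusion followed by Theorem 1. There is no genuine obstacle beyond verifying that the normalizations match, as done above.
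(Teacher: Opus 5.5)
Your proposal is correct and is exactly the paper's argument: the paper simply observes that $G_{\alpha}\subset H_S$ for a suitable finite set $S$ (your choice of all primes $\le\alpha$ works) and then invokes Theorem 1. Your remark that the finiteness part needs the hypothesis $k\geq 3$ from Theorem 1 is also right: the statement as printed omits it, and your $k=2$ family of coprime fourth powers shows the hypothesis is genuinely needed.
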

		
		\begin{theorem}
			Suppose that the abc conjecture is valid and assume that $G_{\delta}$ is $2$-smooth with corresponding constant $c(2)$. Now let $h_0,...,h_{k-1}$ be any non-constant arithmetic progression in $G_{\delta}$, with $h_i=\eta_ix_i^{l_i}$ such that $\mathrm{gcd}(h_0,h_1)\leq c_3$ for some positive constant $c_3$ and $k\geq \mathrm{max}\{3,2c(2)\}$. Then we have $\mathrm{max}(k,l)<c_4$, where $l=\underset{0\leq i\leq k-1}{\mathrm{max}}l_i$ and the number of such progressions is bounded whenever $k\geq 3$ and $l_i\geq 4$.
		\end{theorem}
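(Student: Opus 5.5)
The strategy is to adapt Hajdu's argument for $H_S$ to $G_\delta$. We use the condition $N(\eta_i)\leq x_i^{\delta}$ in place of the finiteness of $S$. The $2$-smoothness hypothesis replaces Hajdu's Lemma 4 in the one place where small exponents cause trouble.

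\emph{Reduction to coprime terms.} Write $d$ for the common difference. Since $\gcd(h_0,h_1)\leq c_3$, every pair $h_i,h_j$ has $\gcd(h_i,h_j)\mid (j-i)\gcd(h_0,h_1)$. Hence after dividing by bounded factors we may apply the abc conjecture to relations such as
\[
h_{i}+h_{i+2}=2h_{i+1}, \qquad (j-i)h_m-(m-i)h_j=-(j-m)h_i .
\]
For each term we have
\[
N(h_i)\leq N(\eta_i)\,x_i\leq x_i^{1+\delta}\ll |h_i|^{(1+\delta)/l_i}.
\]
The abc conjecture applied to a three-term relation with coprime parts then gives
\[
\max|h|\ll_\epsilon \max|h|^{(1+\epsilon)(1+\delta)(1/l_a+1/l_b+1/l_c)}.
\]
So whenever $\sum 1/l<1/((1+\epsilon)(1+\delta))$, the heights are bounded. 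Because $\delta<1/1763$, the loss $(1+\delta)$ is tiny. This is presumably why that constant appears: some inequality in Hajdu's case analysis, tuned to exponents like $(2,3,7)$ or $(3,3,4)$ or $(2,4,5)$, has a margin that survives a factor $1+\delta$ only when $\delta<1/1763$. The first step is therefore to check, triple by triple, that every exponent configuration Hajdu handles with $\sum 1/l_i<1$ still gives $(1+\delta)\sum 1/l_i<1$. The gap $1-\sum 1/l_i$ is at least $1/42$, so this is plausible. Large exponents force $|h_i|$ bounded, which bounds $l$. Among three consecutive terms with all $l_i\geq 4$ we get $\sum 1/l\leq 3/4$, so the terms are bounded and there are finitely many progressions. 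This gives the last assertion.

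\emph{Bounding $k$.} By the above, any triple of terms whose exponents satisfy the hyperbolic condition has bounded height. The remaining danger is a long progression in which almost all exponents are $2$, or all but a few are $2$ with others equal to $3$. Here I follow Hajdu's pigeonhole argument. Among $k$ terms, if at least half have $l_i\geq 3$, I choose suitable triples among them. A pigeonhole or Ramsey step on the residues/positions is needed to keep the coefficients $(j-i),(m-i)$ bounded. The abc bound then bounds those terms, hence $d$ and $h_0$ (two bounded terms determine the progression), and hence $k$. Otherwise more than $k/2$ terms have $l_i=2$. Since $k\geq 2c(2)$, these form more than $c(2)$ terms with exponent $2$. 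I would extract from them a $2$-homogeneous sub-progression: the terms with $l_i=2$ lie in an arithmetic progression, and by a van der Waerden/Szemerédi-type selection, as in Hajdu, we get a long arithmetic subprogression of equal exponent. Its length exceeds $c(2)$, contradicting $2$-smoothness.

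\emph{Main obstacle.} The hardest step is this last selection. A set of more than $k/2$ indices in $\{0,\dots,k-1\}$ need not contain an arithmetic progression of length $c(2)$ unless $k$ is large in Szemerédi's sense, not merely $2c(2)$. I would first try to show that the indices with $l_i\geq 3$ are few, at most two, by the abc argument. Then the $2$-exponent indices contain an interval of length $\geq (k-2)/3$. To match the stated threshold $2c(2)$ one would instead show that at most one index has $l_i\geq 3$, so that one of the two halves of the progression is entirely $2$-homogeneous of length $\geq c(2)$. I expect that pinning down exactly this count, using abc on triples with exponents $(2,2,l)$, $(2,3,l)$ together with the $\delta$-margin, is where the real work lies.
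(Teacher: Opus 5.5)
\textbf{Finiteness claim.} Your argument here is correct and more direct than the paper's. You have $N(h_i)\le N(\eta_i)x_i\le x_i^{1+\delta}\le |h_i|^{(1+\delta)/l_i}$. Apply abc to $h_m-2h_{m+1}+h_{m+2}=0$, after removing the common factor, which divides $2\gcd(h_0,h_1)$. When all $l_i\ge 4$ this gives $H\ll H^{\frac34(1+\delta)(1+\epsilon)}$, so every term is bounded and there are only finitely many progressions. This works for any $k\ge 3$, without $2$-smoothness, and already for $\delta<1/3$. The paper instead bounds $N(\eta_i)$ (Lemma 3.1), places the progression in $H_S$ with $S$ the primes below $c_7$, and quotes Hajdu's Theorem 1.

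\textbf{Bounding $l$.} The gap is in $\max(k,l)<c_4$. To bound $l$ you need, for each $i$ with $l_i$ large, a triple through $i$ with a margin, and $(2,2,l_i)$ has none. This is where the paper spends the hypotheses you tried to use for $k$. Since $k\ge 2c(2)$, one side of $i$ contains $c(2)$ consecutive terms, so $2$-smoothness yields $j$ with $0<|i-j|\le c(2)$ and $l_j\ge 3$. Take $t\ne i,j$ with $|t-i|\le 2$. The identity $(j-t)h_i+(t-i)h_j+(i-j)h_t=0$ then has coefficients at most $c(2)+2$, so no pigeonhole or Ramsey step is needed. For $l_i\ge 7$ the margin is $1-\frac17-\frac13-\frac12=\frac1{42}$. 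With $\epsilon=\frac1{42}$ the abc exponent becomes $(1+\delta)\frac{1763}{1764}$, which is $<1$ exactly when $\delta<\frac1{1763}$; that is where the constant comes from. Your worry about matching the threshold $2c(2)$ is therefore misplaced: $k\ge 2c(2)$ is needed only for this local step, and $k<c_4$ allows any constant.

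\textbf{Bounding $k$.} Your argument here does not go through, for four reasons.
\begin{itemize}
\item Among the terms with $l_i\ge 3$, all exponents may equal $3$. Then $(3,3,3)$ gives $\sum 1/l=1$ and abc yields nothing.
\item Bounding two terms bounds $h_0$ and $d$ but not $k$.
\item The density step fails, as you observe.
\item Nothing forces at most one or two indices to have $l_i\ge 3$.
\end{itemize}
Once $l\le L$ is known, the natural step is van der Waerden colouring by exponent (Proposition 2.3). That needs $l$-smoothness for every colour. Your own abc argument gives it for $l\ge 4$, and $l=2$ is assumed, but $l=3$ is not available in $G_\delta$.

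The paper's way around this is the reduction to $H_S$. There Hajdu's Lemma 4 supplies $l$-smoothness for every $l$; its proof uses that the $\eta$'s range over finitely many $l$-th-power-free $S$-units, together with Ramsey theory and Darmon--Merel. Be aware, however, that the paper's Lemma 3.1 establishes $N(\eta_i)<c_7$ only under $l_i\ge 4$. Its proof in fact only covers indices lying in a triple with the $\frac1{42}$ margin. So for long progressions whose exponents are mostly $2$ and $3$, the reduction is not carried out in the paper either. Closing that case needs an extra input, such as $3$-smoothness of $G_\delta$.
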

		\noindent
		We denote by $\omega(y)$ the number of distinct prime divisors of the integer $y$. For a non-constant arithmetic progression $h_0,...,h_{k-1}$, let $d=h_1-h_0$. 
		\begin{theorem}
			Suppose that the abc conjecture is valid. Let $h_0,...,h_{k-1}$ be any non-constant arithmetic progression in $G_{!}$, with $h_i=l_i!x_i^{l_i}$ such that $\mathrm{gcd}(h_0,h_1)\leq c_5$ for some positive constant $c_5$. Assume $\omega(d)$ is bounded. Then number of such progressions with $k\geq 3$ and $l_i\geq 4$ is bounded.
		\end{theorem}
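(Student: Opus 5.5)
The strategy is to reduce to Hajdu's setting, Theorem 1, by showing that the coefficients $l_i!$ can only involve primes from a set that is fixed once the exponents are bounded. The abc conjecture is used twice: first to bound the exponents $l_i$, then to bound the number of progressions.

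\emph{Step 1: bounding the exponents.} Write $h_i=l_i!\,x_i^{l_i}$ and $d=h_1-h_0$. Put $g=\gcd(h_0,h_1)\le c_5$, so every term is divisible by $g$. Choose three terms, say $h_0,h_1,h_2$, which satisfy $h_0+h_2=2h_1$. After dividing by $\gcd(h_0,h_2)$, which divides $2g$ and so is bounded, we get a coprime abc-triple. Using $N(h_i)\le N(l_i!)\,x_i$ together with the abc conjecture, we obtain
\[
\max_i |h_i| \ll_\epsilon \Bigl(\prod_{i=0}^{2} N(l_i!)\,x_i\Bigr)^{1+\epsilon}.
\]
The key estimate here is $N(l!)\le 4^{l}$. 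On the other hand $|h_i|\ge l_i!\,x_i^{l_i}$, and $l!$ grows much faster than $4^{l}$. Taking $\epsilon$ small and all $l_i\ge 4$, and comparing as in Hajdu's argument (with $L=\max l_i$, so that each $x_i\le |h_{\max}|^{1/l_i}$), we get
\[
|h_{\max}|^{1-(1+\epsilon)\sum_i 1/l_i}\ll 4^{3(1+\epsilon)L}.
\]
Meanwhile $|h_{\max}|\ge L!$. For $l_i\ge 4$ we have $\sum_i 1/l_i\le 3/4$, so the exponent on the left is at least $1/4-3\epsilon/4>0$. This forces $L$ to be bounded.

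\emph{Step 2: the bounded-exponent case.} Once every $l_i<c$, each coefficient $l_i!$ lies in $\mathbb{Z}_S$ with $S$ the set of primes below $c$. The progression therefore lies in $H_S$. Theorem 1 then applies with $k\ge 3$ and $l_i\ge 4$, and bounds the number of such progressions.

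\emph{Main obstacle.} The delicate point is Step 1 when the $l_i$ are very unequal. If some $x_i$ are small, the inequality $|h_{\max}|\ge L!$ may come from a single term while the other terms have small exponents. In that regime I would use the hypothesis that $\omega(d)$ is bounded. Every prime $p\le \min(l_0,l_1)$ divides both $h_0$ and $h_1$, hence divides $\gcd(h_0,h_1)\le c_5$. So $\min(l_0,l_1)$ is bounded. Applying the same argument to every pair of consecutive terms (their gcd divides $g$) bounds all but possibly one $l_i$. For the remaining large exponent $l_j$, the term $h_j$ is divisible by all primes up to $l_j$. Hence the neighbouring differences $h_{j\pm1}-h_j=\pm d$ together with the bounded gcd constrain the primes dividing $d$. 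This is where $\omega(d)$ bounded should enter, by limiting how many small primes can divide $d$. Combining this with the abc inequality for the triple centred at $j$ should give $l_j$ bounded. Once that holds, Step 2 finishes the proof.
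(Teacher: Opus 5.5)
Your Steps 1 and 2 already give a complete proof, by a route that differs from the paper's in both halves.

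\textbf{How the paper argues.} The paper first establishes 2-smoothness of $G_{!}$ (Lemma 2.2, via Shorey--Tijdeman); this is the only place where boundedness of $\omega(d)$ enters. Lemma 4.1 then runs the abc argument on a triple made of a term with $l_i\ge 7$, a nearby term with $l_j\ge 3$ supplied by 2-smoothness, and one more term, so that $\sum 1/l<41/42$. Finally, for each of the finitely many exponent vectors, the paper handles $k=3,4,5$ directly with Darmon--Granville (Theorem 2.4) and Faltings.

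\textbf{How you argue.} You use that every $l_i\ge 4$, so any three consecutive terms already give $\sum 1/l_i\le 3/4$. The estimate $N(l!)\le 4^l$ together with $|h_{\max}|\ge L!$ then bounds the exponents with no smoothness input. You then note that the progression lies in $H_S$ for a fixed $S$ and quote Theorem 1, exactly as the paper does for Theorem 4.

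\textbf{Comparison.} Your version is shorter and does not need the hypothesis on $\omega(d)$ at all. It also covers every $k\ge 3$, whereas the paper's Lemma 4.1 is stated only for $k\ge 2c(2)$. The paper's version buys two things: an exponent bound that still works when some exponents equal $2$ or $3$, and a self-contained finiteness argument (essentially the proof of Theorem 2) that it reuses for Theorem 7. Yours instead relies on Hajdu's counting result as a black box.

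\textbf{Two corrections.} First, in Step 1 you should run the argument on every consecutive triple $h_{i-1},h_i,h_{i+1}$, not just on $h_0,h_1,h_2$. This is legitimate because $\gcd(h_i,h_{i+1})=\gcd(h_0,d)=g$ for all $i$, so each such triple has the same bounded gcd.

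Second, you should drop your ``main obstacle'' paragraph. The inequality in Step 1 does not care how unequal the $l_i$ are, so there is no obstacle to address. The argument sketched there is also faulty. From $\min(l_i,l_{i+1})!\mid g$ you only learn that no two adjacent exponents are both large, not that all but one exponent is bounded. The closing appeal to $\omega(d)$ is a hope rather than an argument.
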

		\begin{rema2}
			\textnormal{As for Theorem 1, the condition that $\mathrm{gcd}(h_0,h_1)$ is bounded in Theorems 3 and 4 is necessary. Otherwise, there exist non-constant arithmetic progressions in $G_{\alpha}$, and $G_{\delta}$ respectively, consisting of non-zero perfect powers having arbitrary many terms. See \cite{HAY}, Remark 2 for examples.}
		\end{rema2}
		\noindent
		The proof of Theorem 5 shows that we can also formulate the following, more general, result. Let $f:\mathbb{Z}\rightarrow \mathbb{Z}$ be a non-zero function and 
		$$
		G_f=\{f(l)x^l\mid x,l\in \mathbb{Z},\ \textnormal{with}\ x>0, l\geq 2\}.
		$$
		\begin{theorem}
			Let $f$ be a non-zero function such that $f(m_1)\mid f(m_2)$ whenever $m_1\leq m_2$. Assume that the abc conjecture holds and that there exists a $\gamma >0$, such that $N(f(n))^{1+\gamma}=o(f(n))$ as $n\rightarrow \infty$. Let $h_0,...,h_{k-1}$ be a non-constant arithmetic progression in $G_f$ such that $\mathrm{gcd}(h_0,h_1)\leq c_{16}$ and $\omega(d)$ is bounded. Then there are only finitely many such progressions with $k>\mathrm{max}\{2,P(f(2))\}$ and $l_i\geq 4$. 
		\end{theorem}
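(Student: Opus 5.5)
We prove Theorem 6 by reducing it, as in Hajdu's argument for Theorem 1, to a uniform bound on the exponents $l_i$. We then treat the finitely many exponent patterns separately, using the growth hypothesis on $f$ to control the coefficients $f(l_i)$.

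\emph{Step 1: choosing a term with the minimal exponent.} Write $h_i=f(l_i)x_i^{l_i}$ and $d=h_1-h_0$. Since $\gcd(h_0,h_1)\le c_{16}$, we have $\gcd(h_i,h_j)\le c_{16}|i-j|$-type bounds for any $i\neq j$, so any two terms are coprime up to a bounded factor. Let $m=\min_i l_i$. By the divisibility hypothesis $f(m)\mid f(l_i)$ for all $i$, hence $f(m)\mid h_i$ for every $i$, and in particular $f(m)\mid \gcd(h_0,h_1)\le c_{16}$. So $|f(m)|$ is bounded.

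\emph{Step 2: a three-term abc relation.} Take three terms $h_i,h_j,h_t$ (possible since $k\ge 3$) with $2h_j=h_i+h_t$. After dividing by the bounded common factor we get a coprime $abc$-equation. Its radical is at most
\[
\ll \prod N(f(l_\nu))\,|x_\nu| \le \prod N(f(l_\nu))\,|h_\nu/f(l_\nu)|^{1/l_\nu}.
\]
The hypothesis $N(f(n))^{1+\gamma}=o(f(n))$ gives $N(f(l_\nu))\le |f(l_\nu)|^{1/(1+\gamma)}$ for large $l_\nu$. Hence
\[
N(f(l_\nu))\,|x_\nu|\le |h_\nu|^{\max(1/(1+\gamma),\,1/l_\nu)+o(1)}.
\]
With $l_\nu\ge4$, each factor contributes an exponent at most $\max(1/(1+\gamma),1/4)<1$. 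This alone is not enough: the sum of the three exponents must be below $1$. So we must exploit $\omega(d)$ bounded and the condition $k>P(f(2))$.

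\emph{Step 3: using $\omega(d)$ and $k>P(f(2))$.} Since $f(2)\mid f(l_i)$, every prime $p\le P(f(2))$ dividing $f(2)$ divides all $h_i$, hence divides $d$. The condition $k>P(f(2))$ forces, among $k$ consecutive terms, some residue behaviour modulo such primes. We will use it to show that the large coefficients $f(l_i)$ are essentially powers of primes dividing $d$; these primes are boundedly many by $\omega(d)$. Then the equation $h_j-h_i=(j-i)d$ combined with the coprimality lets us move the $f$-part to the $S$-unit side with $S$ the prime divisors of $d$ together with those $\le k$. Thus the progression lies in some $H_S$ with $|S|$ bounded, and Theorem 1 applies. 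Once $S$ is controlled, the second part of Theorem 1 gives that the number of progressions with $l_i\ge4$ is finite. The dependence on the actual primes in $S$, and not only on their number, is the delicate point.

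\emph{Main obstacle.} We do not see how to bound the primes of $d$, only their number, and we do not see how to finish without doing so. Our first attempt will be to apply abc to $h_0+d=h_1$ directly, where the radical of $d$ enters. From this we would try to show $|d|$ is bounded in terms of $\max|h_i|$ to a power $<1$, and then conclude that $\max|h_i|$, and hence the progression, lies in a finite set. This relies on the saving from Step 2 being strong enough, and we have not verified that it is.
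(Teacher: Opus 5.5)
\textbf{Verdict: genuine gap.} Your proposal does not reach a proof. Step~3 and the closing paragraph are a plan that you yourself flag as unverified, and the plan in Step~3 points the wrong way.

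\textbf{Why Step~3 cannot work.} Since $h_i=h_0+id$, one has $\gcd(h_i,d)=\gcd(h_0,h_1)\le c_{16}$ for every $i$. So each $f(l_i)$ shares only a bounded factor with $d$. The coefficients can therefore be ``essentially powers of primes dividing $d$'' only if they are bounded, which is exactly what is to be proved. A bound on $\omega(d)$ also fixes no set $S$, so Theorem~1 is out of reach, as you note. And abc applied to $h_0+d=h_1$ involves $N(d)$, which nothing controls.

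\textbf{How the paper actually proceeds.} In the paper, $\omega(d)$ and $k>P(f(2))$ serve only to make $G_f$ $2$-smooth. For a run of exponent-$2$ terms, $\prod h_i=f(2)^k(\prod x_i)^2$ with $P(f(2)^k)<k$, and Shorey--Tijdeman bounds the run length in terms of $\omega(d)$. Under $l_i\ge 4$ this is not even needed, since three consecutive terms already give $\sum 1/l_\nu\le 3/4$. You are missing the two real ingredients:
\begin{itemize}
\item[(i)] Because $f(l_\nu)\mid f(l)$ with $l=\max l_\nu$, the radical of the three coefficients is at most the single factor $N(f(l))$. The paper keeps this factor separate, whereas your Step~2 converts it three times into powers of the $h_\nu$. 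Abc then gives $H^{\delta}\ll N(f(l))^{1+\epsilon}$, which is played against $f(l)\le H$ and the growth hypothesis to bound $l$.
\item[(ii)] Once the $l_i$ are bounded, the $f(l_i)$ are finitely many fixed integers. Darmon--Granville (Theorem~2.4) together with Faltings then gives finitely many solutions, with bounded gcd, of $jf(l_1)x_1^{l_1}-f(l_j)x_j^{l_j}+(1-j)f(l_0)x_0^{l_0}=0$ for each exponent vector; for $k=3$ this works because $\sum 1/l_i\le 3/4<1$.
\end{itemize}
Neither step needs $H_S$ or any information on the primes of $d$.

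\textbf{Your doubt is justified, and it affects the paper too.} Your worry about whether the saving is strong enough is correct, and it bites the paper at step (i). From $H^{\delta}\ll N(f(l))^{1+\epsilon}$ and $f(l)\le H$ one only gets $f(l)^{\delta}\ll N(f(l))^{1+\epsilon}$. The paper writes $f(l)<c_{14}N(f(l))^{1+\epsilon}$, silently dropping $\delta$, which is at most $1/42$ there. The correct inequality contradicts $N(f(n))^{1+\gamma}=o(f(n))$ only when $1+\gamma\ge(1+\epsilon)/\delta$. This is harmless for $f(n)=n!$, where $N(n!)\le 4^n$ is smaller than any fixed power of $n!$, but not for an arbitrary $\gamma>0$.

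Sharper bookkeeping does not fully rescue it. Use all three $l_\nu\ge 4$, and absorb $N(f(l))\le f(l)^{1/(1+\gamma)}$ into $x_s\le (H/f(l))^{1/l}$ for the index $s$ with $l_s=l$. This bounds the radical only by about $H^{1/2+1/(1+\gamma)}$. That bounds the exponents when $\gamma>1$ and gives nothing for $\gamma\le 1$.

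So finishing along these lines needs either a stronger growth hypothesis or a genuinely new idea, and your proposal supplies neither.
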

		\begin{theorem}
			Let $L,k, D$ bepositive integers with $L\geq 2$, $k\geq 3$. Then there are finitely many non-constant arithmetic progressions $h_0,...,h_{k-1}$ in $G_f$ (or $G_{!}$ resp.), satisfying the following conditions:
			\begin{itemize}
				\item [(i)] For $i=0,...,k-1$, there exists $x_i\in \mathbb{Z}$, $2\leq l_i\leq L$ such that $h_i=f(l_i)x_i^{l_i}$ (or $h_i=l_i!x_i^{l_i}$ resp.),
				\item [(ii)] $\mathrm{gcd}(h_0,h_1)\leq D$,
				\item [(iii)] either $k\geq 5$, or $k=4$ and $l_i\geq 3$ for some $i$, or $k=3$ and $\frac{1}{l_0}+\frac{1}{l_1}+\frac{1}{l_2}<1$.
			\end{itemize}
		\end{theorem}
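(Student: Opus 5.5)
The plan is to reduce the statement to Theorem 2 (the result of Bruin, Gy\H{o}ry, Hajdu and Tengely) by absorbing the coefficients $f(l_i)$, respectively $l_i!$, into a suitable $S$-unit factor. Since the exponents are bounded, $2\leq l_i\leq L$, each coefficient $f(l_i)$ takes one of at most $L-1$ values, namely $f(2),\dots,f(L)$. We may discard any $l$ with $f(l)=0$, since then $h_i=0$, and such terms can occur at most once in a non-constant progression. After discarding these, let $S$ be the finite set of primes dividing $\prod_{l=2,\,f(l)\neq 0}^{L} f(l)$. In the factorial case, $S$ is simply the set of primes $\leq L$. Then each $\eta_i:=f(l_i)$ lies in $\mathbb{Z}_S$ (units $\pm1$ included), and $h_i=\eta_i x_i^{l_i}$ with $x_i\in\mathbb{Z}$. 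This is exactly condition (i) of Theorem 2.

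Next I check the remaining hypotheses. Condition (ii), $\gcd(h_0,h_1)\leq D$, is literally the same in both statements. Condition (iii) is identical as well. Theorem 2 therefore applies with the data $L,k,D,S$ and yields finitely many arithmetic progressions $(h_0,\dots,h_{k-1})$ with the stated properties. Since our progressions form a subset of those counted there, the finiteness follows. One subtlety is that Theorem 2 counts progressions, not representations. This is harmless, because a progression in $G_f$ is in particular a progression admitting the representation required in Theorem 2, so no overcounting issue arises in the inclusion direction.

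The only step needing care, which I expect to be the main (though mild) obstacle, is the degenerate behaviour of $f$. There are two cases. First, terms with $f(l_i)=0$ or $x_i=0$ give $h_i=0$. The second case concerns the sign and the convention $x>0$ versus $x_i\in\mathbb{Z}$; this is harmless since Theorem 2 allows arbitrary $x_i\in \mathbb{Z}$ and $-1\in\mathbb{Z}_S$. For the zero terms, I would note that a non-constant progression contains at most one zero term. If one wishes to exclude such terms entirely, I would either restrict to nonzero $h_i$ (as implicit in the definition of $G_f$ via $x>0$ and $f$ non-zero, interpreting $f(l_i)\neq 0$), or observe that Theorem 2 already allows them in its set-up. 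With this settled, the result follows directly from Theorem 2 without any use of the abc conjecture, consistent with the statement not assuming it.
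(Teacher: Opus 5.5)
Your argument is correct, but it takes a genuinely different and much shorter route than the paper's proof.

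\textbf{Your route.} Since $2\le l_i\le L$, the coefficients $f(l_i)$ (resp.\ $l_i!$) range over a finite set of integers. All the nonzero ones therefore lie in $\mathbb{Z}_S$ for the finite set $S$ of primes dividing them (resp.\ the primes $\le L$). So every progression in the statement is among those counted by Theorem 2, with the same $L,k,D$ and the same exponents $l_i$. Condition (iii) transfers verbatim, and finiteness is immediate. This parallels the paper's own remark that Theorem 3 follows from Theorem 1 because $G_\alpha\subset H_S$. It also makes transparent that no abc hypothesis is involved.

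\textbf{The paper's route.} The paper re-runs the Bruin--Gy\H{o}ry--Hajdu--Tengely machinery by hand:
\begin{itemize}
\item it reduces to $k\in\{3,4,5\}$ and fixes the exponent vector;
\item it rewrites the progression as the ternary equations $j f(l_1)x_1^{l_1}-f(l_j)x_j^{l_j}+(1-j)f(l_0)x_0^{l_0}=0$;
\item it applies Theorem 2.4 (Darmon--Granville) and bounds the genus of the fibre products $C_2\times_u C_3$ (and $C_2\times_u C_3\times_u C_4$) via Riemann--Hurwitz;
\item it then invokes Faltings.
\end{itemize}
That approach shows the geometric mechanism behind the finiteness. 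However, it is only a sketch, and it relies on auxiliary steps your reduction avoids by using Theorem 2 as a black box: normalizing $f(l_i)$ to be $l_i$-th power free, the claim $\gcd(x_i,x_j)\le D$, and the case-by-case genus bounds.

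\textbf{One tidy-up.} For an index with $f(l_i)=0$, your sentence ``$\eta_i:=f(l_i)\in\mathbb{Z}_S$'' fails as written. The clean fix is the second option you mention. Write $h_i=0=1\cdot 0^{l_i}$ with $\eta_i=1\in\mathbb{Z}_S$, $x_i=0\in\mathbb{Z}$ and the same exponent $l_i$. Then conditions (i) and (iii) of Theorem 2 still hold, and nothing else changes.
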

	
		\section{Preliminaries}
		\noindent
		We state some results that we need to prove Theorem 4 and 5. 
		\begin{lem}
			For every positive integers $u$ and $v$ there exists a positive integer $w$ such that for any coloring of the set $\{1,...,w\}$ using $u$ colors, we get a non-constant monochromatic arithmetic progression, having at least $v$ terms.
		\end{lem}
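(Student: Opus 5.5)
This lemma is van der Waerden's theorem, and I would prove it by the classical double induction (the Graham–Rothschild presentation); in the paper itself it would be enough to cite it. Write $W(v,u)$ for the least admissible $w$. The cases $v\le 2$ are trivial: $W(1,u)=1$ and $W(2,u)=u+1$, since among $u+1$ integers two share a colour and two distinct integers form a non-constant progression. So I would assume that $W(v-1,u)$ exists for every $u$, and prove that $W(v,u)$ exists by an inner induction that builds "colour-focused" families of progressions.

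\textbf{Focused families.} Fix $u$ and $v\ge 3$. Call progressions $a_j, a_j+d_j,\dots,a_j+(v-2)d_j$ with $d_j>0$, for $j=1,\dots,r$, \emph{colour-focused} at $f$ if each is monochromatic, their colours are pairwise distinct, and $a_j+(v-1)d_j=f$ for every $j$. The claim to prove by induction on $r\le u$ is: there is an $N(r)$ such that every $u$-colouring of $\{1,\dots,N(r)\}$ contains either a monochromatic $v$-term progression or $r$ colour-focused progressions of length $v-1$ whose common focus also lies in $\{1,\dots,N(r)\}$. With $r=u$ this finishes the proof. The focus $f$ must receive one of the $u$ colours, and those colours are already used by the $u$ focused progressions, so $f$ extends one of them to a monochromatic progression with $v$ terms and positive difference.

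\textbf{Main step (from $r$ to $r+1$).} This is the step I expect to be hardest. Set $m=N(r)$ and $M=W(v-1,u^{2m})$, and split $\{1,\dots,2mM\}$ into $M$ consecutive blocks of length $2m$. Colour each block by its whole colour pattern, which gives $u^{2m}$ block-colours. By the outer induction there are blocks $B_b, B_{b+s},\dots,B_{b+(v-2)s}$ with $s>0$ that all carry the same pattern. Inside the first $m$ entries of $B_b$, the inner hypothesis gives either a monochromatic $v$-term progression, in which case we are done, or $r$ focused progressions $a_j+ td_j$ with focus $f$, where $f$ lies in $B_b$.

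Now shift everything by block multiples. Let $D=2ms$, and define the new progressions $a_j+t(d_j+D)$ for $t\le v-2$. Because the blocks have identical patterns, the point $a_j+t(d_j+D)$ has the same colour as $a_j+td_j$, so each new progression is monochromatic with the same colour as before. All of them are focused at $f+(v-1)D$. Add one more progression, $f+tD$ for $t\le v-2$; it is monochromatic because it has the colour of $f$. If the colour of $f$ equals one of the $r$ colours, the original $j$-th progression extended by $f$ is already a monochromatic $v$-term progression. Otherwise we now have $r+1$ progressions focused at $f+(v-1)D$ with distinct colours.

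\textbf{Bookkeeping.} Two things remain to check. First, the new focus $f+(v-1)D$ must stay inside the range. For that I would enlarge the interval to $\{1,\dots,4mM\}$, or equivalently use $v$ blocks instead of $v-1$, so that $f+(v-1)D$ lands in a block still within bounds. Second, all differences must stay positive, which holds since $d_j>0$ and $D>0$. I expect the bound choices in this bookkeeping to be the only delicate part. Everything else is formal.
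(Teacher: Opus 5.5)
Your proposal is correct, but it takes a different route from the paper. The paper gives no argument for this lemma and simply cites van der Waerden's 1927 paper. You instead prove the cited theorem from scratch by the Graham--Rothschild colour-focusing double induction.

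The key steps check out. For $t\le v-2$, the points $a_j+t(d_j+D)=a_j+td_j+tD$ and $f+tD$ sit in $B_{b+ts}$ at the same positions that $a_j+td_j$ and $f$ occupy in $B_b$. Since those blocks share a colour pattern, the shifted progressions are monochromatic, all with focus $f+(v-1)D$. At $r=u$ the focus must extend one of the progressions. What your route buys is a self-contained proof with an explicit recursive bound $N(r+1)=4N(r)\,W(v-1,u^{2N(r)})$. What the citation buys is brevity, and that is all the paper needs, since the lemma is only used as a black box in the proposition that follows it.

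Two small bookkeeping points.

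First, you never state the base of the inner induction. You can take $r=1$ with $N(1)=2W(v-1,u)$: a monochromatic $(v-1)$-term progression $a+td$ in $\{1,\dots,W(v-1,u)\}$ has focus $a+(v-1)d\le 2W(v-1,u)$, because $d\le a+(v-2)d$ when $v\ge 3$. Alternatively, take $r=0$ with $N(0)=1$ and regard the empty family as focused at the point $1$.

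Second, of your two fixes for the range of the new focus, only the enlargement to $\{1,\dots,4mM\}$ is right as stated. The block $B_{b+(v-1)s}$ only has to exist; it does not need to share the common pattern, since the focus may have any colour. Because $b+(v-1)s\le 2\bigl(b+(v-2)s\bigr)\le 2M$ for $v\ge 3$, it suffices to colour only the first $M$ blocks by pattern and work inside $\{1,\dots,4mM\}$. If ``use $v$ blocks instead of $v-1$'' means asking the outer hypothesis for $v$ same-pattern blocks, that would need $W(v,u^{2m})$, which is circular.
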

		\begin{proof}
			This is due to van der Waerden \cite{VW}.
		\end{proof}
		\begin{lem}
			Let $h_0,...,h_{k-1}$ be a $2$-homogeneous arithmetic progression in $G_{!}$. Assume $k\geq 3$ and that $\omega(d)$ is bounded. Then $k<c(2)$, where $c(2)$ is a positive constant, depending only on $l=2$.
		\end{lem}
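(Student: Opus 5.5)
The plan is to observe that in $G_{!}$ the coefficient is completely determined by the exponent, so a $2$-homogeneous progression has no freedom in its coefficients at all, and then to apply a classical result on squares in arithmetic progression. In fact I expect the lemma to hold with the explicit constant $c(2)=4$, without using the hypothesis on $\omega(d)$.

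First, I would note that if $h_0,\dots,h_{k-1}$ is a $2$-homogeneous progression in $G_{!}$, then every term has the form $h_i=l_i!\,x_i^{l_i}$ with $l_i=2$. Hence $h_i=2x_i^2$ with $x_i>0$ for all $0\le i\le k-1$. The common difference is $d=h_1-h_0=2(x_1^2-x_0^2)$, so $d$ is even. Dividing the whole progression by $2$ gives an arithmetic progression
$$
x_0^2,\ x_1^2,\ \dots,\ x_{k-1}^2
$$
with common difference $d/2\neq 0$. This is a non-constant progression of squares of positive integers, so its terms are pairwise distinct.

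Second, I would invoke the classical theorem of Fermat, proved by Euler, that there are no four distinct squares in arithmetic progression. One proof reduces to the elliptic curve $y^2=x(x-1)(x+1)$, or equivalently to a descent on $x^4-x^2y^2+y^4=z^2$. Since $x_0^2,\dots,x_{k-1}^2$ are distinct squares in arithmetic progression, this gives $k\le 3$. Together with the hypothesis $k\ge 3$ we get $k=3<4$, so the statement holds with $c(2)=4$.

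There is essentially no obstacle beyond correctly citing Euler's theorem. The only points to check are the following:
\begin{itemize}
\item the terms really all share the coefficient $2!=2$, because $l_i=2$ is fixed;
\item non-constancy together with $x_i>0$ gives distinct squares.
\end{itemize}
The boundedness of $\omega(d)$ is not needed. I would remark on this, since it is presumably included only for uniformity with the later arguments for general $l$.
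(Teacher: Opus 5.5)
Your proof is correct, and it takes a different and more elementary route than the paper.

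The paper multiplies the terms, $h_0\cdots h_{k-1}=2^k(x_0\cdots x_{k-1})^2$, and appeals to the Shorey--Tijdeman theorem on $n(n+d)\cdots(n+(k-1)d)=by^l$ with $P(b)\le k$. That theorem bounds $k$ in terms of $\omega(d)$. This is the only reason the lemma assumes $\omega(d)$ is bounded, and the paper's constant really depends on that assumed bound, not only on $l=2$. The theorem also requires $\gcd(n,d)=1$, which fails as stated, since $2$ divides both $h_0$ and $d$; one would first have to divide the progression by $2\gcd(x_0,x_1)^2$.

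You instead use that the coefficient in $G_{!}$ is forced to be $2!=2$, reduce to a non-constant progression of squares, and invoke the Fermat--Euler theorem. This gives the absolute bound $k\le 3$ and makes the $\omega(d)$ hypothesis superfluous. It also carries over verbatim to $G_f$: divide by $f(2)$, which is non-zero because the progression is non-constant. There it would make the condition $k>P(f(2))$ unnecessary at the start of the proof of Theorem 6. The Shorey--Tijdeman approach only pays off when the coefficients $\eta_i$ may vary (with prime factors at most $k$), which does not happen in $G_{!}$.

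One correction to your aside. The elliptic curve underlying Euler's theorem is $y^2=x(x+1)(x+4)$, of conductor $24$; it is the Jacobian of the quartic $x^4-x^2y^2+y^4=z^2$ you mention. The curve $y^2=x^3-x$ belongs to Fermat's right-triangle theorem and is not equivalent. Your argument only uses the statement of Euler's theorem, so this does not affect it.
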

		\begin{proof}
			Consider $\Pi=h_0\cdots h_{k-1}=2^kx^2$, where $x=x_0\cdots x_{k-1}$. Now $P(2^k)=2<k$. And we know from a theorem of Shory and Tijdeman \cite{SHT} that $k$ is bounded by a positive constant depending only on $\omega(d)$. Since $\omega(d)<a'$, $k<c(a',2)$. Essentially, $k<c(2)$. 
		\end{proof}
	\noindent
	In the context of the present paper, we also have the following easy consequence of Lemma 2.1.
		\begin{prop}
		Let $L\geq 2$ be a fixed integer and assume that any non-constant $l$-homogeneous arithmetic progression with $2\leq l\leq L$, is smooth. Then for any non-constant arithmetic progression $h_0,...,h_{k-1}$ with $h_ii=\eta_ix_i^{l_i}$ such that $l_i\leq L$, we have $k\leq C_0$ for a suitable constant $C_0$, depending only on $L$. 
		\end{prop}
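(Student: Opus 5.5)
The plan is to argue by contradiction using van der Waerden's theorem (Lemma 2.1), coloring indices by exponent. For each $l$ with $2\leq l\leq L$, the smoothness hypothesis gives a constant $c(l)$ such that every non-constant $l$-homogeneous progression has fewer than $c(l)$ terms. Set $v=\max_{2\leq l\leq L} c(l)$, which depends only on $L$, and let $u=L-1$ be the number of possible exponents. Lemma 2.1 applied to $u$ and $v$ produces an integer $w=w(L)$, and we take $C_0=w$, or $w-1$ depending on indexing conventions.

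Now suppose $h_0,\dots,h_{k-1}$ is a non-constant progression with $h_i=\eta_ix_i^{l_i}$ and $2\leq l_i\leq L$, and suppose $k>C_0$. Color each index $j\in\{1,\dots,w\}$ by the exponent $l_{j-1}\in\{2,\dots,L\}$; this uses at most $u$ colors. By Lemma 2.1 there is a non-constant monochromatic arithmetic progression of indices $j_1<j_2<\dots<j_v$ with common difference $e\geq 1$ and a common exponent $l$. The terms $h_{j_1-1},\dots,h_{j_v-1}$ then form an arithmetic progression with difference $e\cdot d$. This progression is non-constant, because $d\neq 0$ since the original progression is non-constant. Every one of its terms has the form $\eta x^l$ with the same $l$, so it is a non-constant $l$-homogeneous progression with $v\geq c(l)$ terms, which contradicts smoothness. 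Hence $k\leq C_0$.

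The only delicate point is an interpretive one rather than a technical one. The hypothesis must be read as saying that the relevant family of $l$-homogeneous progressions (in the ambient set under consideration, e.g.\ $G_{!}$ or $G_\delta$) is $l$-smooth. We also need the sub-progression extracted above to stay inside that family. It does, because it consists of elements of the same set, keeps the same form $\eta_ix_i^{l}$, and has nonzero $\eta_i$. Any extra side conditions built into the family, such as a bounded $\omega(d)$ as in Lemma 2.2, are preserved only up to the factor $e\leq w$. One should check that these conditions are either inherited or can be absorbed into the constant, for instance by noting that $\omega(ed)\leq\omega(d)+\omega(e)$ with $e\leq w(L)$. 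Once this is settled, the argument is a direct application of Lemma 2.1.
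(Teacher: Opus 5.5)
Your proof is correct and follows the paper's own argument: you color the indices by exponent, apply van der Waerden (Lemma 2.1) with $(u,v)=(L-1,\max_{2\le l\le L}c(l))$, and note that a monochromatic index progression gives a non-constant $l$-homogeneous sub-progression with at least $c(l)$ terms, which contradicts smoothness. Your write-up is cleaner than the paper's. One caveat: your suggested absorption of side conditions such as a bounded $\omega(d)$ is circular as written, because $w$ depends on $c(l)$ and $c(l)$ in turn depends on the $\omega$-bound, but the proposition as stated quantifies over all non-constant $l$-homogeneous progressions, so you do not need it.
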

		\begin{proof}
			For any $l$-homogeneous smooth arithmetic progression, we obtain a constant $c(l)$. Let $c_2(L)$ be the maximun of the values $c(l)$, where $l$ ranges through the interval $[2,L]$. Apply Lemma 2.1 to the progression $h_0,...,h_{k-1}$ with $(u,v)=(L-1,c_2(L))$. Here terms having the same exponent have the same color. Hence Lemma 2.1 gives some constant $C_0$, depending only on $L$, such that $k\geq C_0$. But this is a contradiction.
		\end{proof}
		\begin{thm}[see \cite{DG}]
			let $A,B,C$ and $r,s,t$ be non-zero integers with $r,s,t\geq 2$, and let $S$ be a finite set of primes. Then there exists a number field $K$ such that all solutions $x,y,z\in \mathbb{Z}$ with $\mathrm{gcd}(x,y,z)\in \mathbb{Z}_S$ to the equation
			$$
			Ax^r+By^s=Cz^t
			$$
			correspond, up to weighted projective equivalence, to $K$-rational points on some algebraic curve $X_{r,s,t}$ defined over $K$. Putting $u=-Ax^r/Cz^t$, the curve is a Galois-cover of the $u$-line of degree $d$, unramified outside $u\in \{0,1,\infty\}$ and with ramification indices $e_0=r, e_1=s$ and $e_2=t$. Writing $\chi=1/r+1/s+1/t$ and $g$ for the genus of the curve, we find
			\begin{itemize}
				\item [(i)] if $\chi>1$, then $g=0$ and $d=2/\chi$,
				\item [(ii)] if $\chi=1$, then $g=1$,
				\item [(iii)] if $\chi<1$, then $g>1$.
			\end{itemize}
		\end{thm}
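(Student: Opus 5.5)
This statement is the Darmon--Granville theorem \cite{DG}, which the paper quotes as a black box. So the plan is only to recall how that proof goes, in outline.

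\textbf{Step 1: normalize and clear common factors.} Write a solution as $(x,y,z)$ with $g=\gcd(x,y,z)\in\mathbb{Z}_S$. Absorb the bounded $S$-part of $g$ and the finitely many primes dividing $ABC$ into an enlarged finite set $S'$. Then any solution gives a point $u=-Ax^r/(Cz^t)\in\mathbb{P}^1(\mathbb{Q})$. At primes outside $S'$, the valuations of $u$, of $1-u=By^s/(Cz^t)$ and of $1/u$ are divisible by $r$, $s$ and $t$ respectively.

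\textbf{Step 2: build the cover.} Construct a Galois cover $\phi:X_{r,s,t}\to\mathbb{P}^1_u$ that is branched only over $0,1,\infty$, with ramification indices exactly $r,s,t$.
\begin{itemize}
\item When $\chi>1$, take the quotient of $\mathbb{P}^1$ by the finite triangle group: dihedral, $A_4$, $S_4$ or $A_5$.
\item When $\chi=1$, take the quotient of an elliptic curve with CM by $\mu_2$, $\mu_3$, $\mu_4$ or $\mu_6$.
\item When $\chi<1$, take a finite-index normal torsion-free subgroup $\Gamma$ of the hyperbolic triangle group $\Delta(r,s,t)$, and let $X=\Gamma\backslash\mathbb{H}$.
\end{itemize}
This cover is defined over some number field.

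\textbf{Step 3: the genus formula.} Riemann--Hurwitz with degree $d$ gives
$$2g-2=-2d+d\left(1-\tfrac1r\right)+d\left(1-\tfrac1s\right)+d\left(1-\tfrac1t\right)=d(1-\chi).$$
This yields the three cases directly:
\begin{itemize}
\item If $\chi>1$, then $g=0$ and $d=2/\chi$.
\item If $\chi=1$, then $g=1$.
\item If $\chi<1$, then $g>1$.
\end{itemize}

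\textbf{Step 4: descent to rational points.} The ramification conditions from Step 1 mean that the fibre of $\phi$ over each such $u$ is unramified outside $S'$. By Chevalley--Weil, the points of these fibres are defined over extensions of bounded degree, unramified outside $S'$. Hermite's theorem says there are only finitely many such fields. Twisting the cover accordingly, and taking $K$ to be the compositum of these fields, every solution lifts to a $K$-rational point on one of finitely many twists $X_{r,s,t}$.

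\textbf{Main obstacle.} The main difficulty is this descent step: showing that the twists are finite in number and all defined over a single field $K$. This is exactly the content of \cite{DG}.
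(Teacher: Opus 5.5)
Your outline is the standard Darmon--Granville argument: a Galois triangle cover, Riemann--Hurwitz, then Chevalley--Weil plus Hermite. That is all the paper relies on, since it gives no proof of its own, only the citation. However, Step 3 contains a false conclusion.

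From your correct identity $2g-2=d(1-\chi)$, the case $\chi>1$ gives $g=0$ and $-2=d(1-\chi)$, i.e.\ $d=2/(\chi-1)$, not $d=2/\chi$. No argument can produce $d=2/\chi$. Since $\chi>1$, it would force $d<2$, whereas a cover with a ramification index $r\geq 2$ has degree $d\geq r\geq 2$. Your own list of groups confirms the correct value. The triple $(2,2,n)$ gives the dihedral group of order $2n$, and $(2,3,3),(2,3,4),(2,3,5)$ give $|A_4|=12$, $|S_4|=24$, $|A_5|=60$; each equals $2/(\chi-1)$. So item (i) as printed is a misprint. You should have flagged and corrected it rather than asserting that Riemann--Hurwitz ``yields it directly''.

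Step 1 has a similar slip, again traceable to the statement's normalization. With $Ax^r+By^s=Cz^t$ and $u=-Ax^r/(Cz^t)$, you get $1-u=(Cz^t+Ax^r)/(Cz^t)$, not $By^s/(Cz^t)$. For the local analysis over $u=1$ to work, you need $u=Ax^r/(Cz^t)$, or the equation in the form $Ax^r+By^s+Cz^t=0$.

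There are also some smaller imprecisions:
\begin{itemize}
\item $\gcd(x,y,z)\in\mathbb{Z}_S$ is not bounded. What you actually use is that a prime $p\notin S$ with $p\nmid ABC$ divides at most one of $x,y,z$.
\item The divisibility conditions concern \emph{positive} valuations of $u$, $1-u$ and $1/u$. A negative $v_p(u)$ is divisible by $t$, not by $r$.
\item Before the local (tame, Kummer-type) argument of Step 4 applies, $S'$ must also contain the primes dividing $d$ and the primes of bad reduction of the cover.
\item $\mu_2$ does not belong in the Euclidean list, since $E\to E/\{\pm1\}$ has four branch points. The relevant cases are $\mu_3$ for $(3,3,3)$, $\mu_4$ for $(2,4,4)$ and $\mu_6$ for $(2,3,6)$.
\end{itemize}
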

		
		\section{Proof of Theorem 4}
		\noindent
		The theorem follows from the following lemma and from Theorem 1.
			\begin{lem}
			Let $G_{\delta}$ be 2-smooth and let $c(2)$ be the constant corresponding to a $2$-homogeneous smooth arithmetic progression in $G_{\delta}$. Suppose the abc conjecture holds. Let $h_0,...,h_{k-1}$ be any arithmetic progression in $G_{\delta}$ with $h_i=\eta_ix_i^{l_i}$ and $l_i\geq 4$ such that $\mathrm{gcd}(h_0,h_1)<c'_0$ and $k\geq \mathrm{max}\{3,2c(2)\}$, Then there exists a $c_7$ such $N(\eta_i)<c_7$ for all $i=0,...,k-1$.
		\end{lem}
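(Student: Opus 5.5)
The plan is to show that the $x_i$ cannot all be large relative to the common difference, so that $N(\eta_i)\leq x_i^{\delta}$ becomes an absolute bound. The argument follows Hajdu's strategy for $H_S$, using abc on three-term relations inside the progression.

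\textbf{Step 1: reduction to coprime terms.} Write $d=h_1-h_0$. Since $\gcd(h_0,h_1)<c_0'$, every $\gcd(h_i,h_j)$ divides $(j-i)\gcd(h_0,h_1)$, so it is bounded in terms of $k$ and $c_0'$. For any three indices $i<j<m$ we have the relation
$$(m-j)h_i+(j-i)h_m=(m-i)h_j .$$
After dividing by the (bounded) common gcd, this is a coprime abc-triple.

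\textbf{Step 2: abc with the exponents $l_i\geq 4$.} The radical of the product satisfies
$$N(h_ih_jh_m)\ll_k N(\eta_i\eta_j\eta_m)\,x_ix_jx_m\leq (x_ix_jx_m)^{1+\delta}.$$
The abc conjecture then gives
$$\max(|h_i|,|h_j|,|h_m|)\ll_{\epsilon,k}(x_ix_jx_m)^{(1+\delta)(1+\epsilon)}.$$
On the other hand $|h_t|\geq x_t^{l_t}\geq x_t^4$. Suppose the $x$'s were comparable, say every $h_t$ of size about $H$ so that $x_t\leq H^{1/4}$. Then we would get $H\ll H^{3(1+\delta)(1+\epsilon)/4}$, which bounds $H$ because $\delta<1/1763$ is tiny.

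The terms need not be comparable in size. This is where $k\geq 2c(2)$ enters. At most $c(2)$ consecutive terms can form a $2$-homogeneous subprogression, and by van der Waerden (Lemma 2.1) colouring by exponent forces monochromatic long subprogressions. Together these let me choose the triple $i,j,m$ so that the two terms of largest modulus in the arithmetic progression have size comparable to $|d|k$. At least $k/2$ terms have modulus $\geq |d|/2$, so such a triple among them exists.

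Applying Step 2 to that triple bounds $|d|$ and $|h_j|$. Hence every $x_i$ is bounded, and so $N(\eta_i)\leq x_i^\delta<c_7$.

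\textbf{Main obstacle.} The delicate step is controlling a term that is small, possibly with $x_i=1$ and its whole size carried by $\eta_i$, where the radical bound $N(\eta_i)\le x_i^\delta$ is weak. I would handle this by choosing the triple so that none of its terms is small, which uses the $2c(2)$ hypothesis. Failing that, I would invoke the $2$-smoothness to rule out too many terms with $l_i=2$-like behaviour.
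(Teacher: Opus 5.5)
There is a genuine gap: the obstacle you single out does not exist, your Step 2 already contains the proof, and the workaround you substitute is not justified and would give a constant depending on $k$.

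\textbf{Comparability is not needed.} Put $H=\max(|h_i|,|h_j|,|h_m|)$. Since $|\eta_s|\geq 1$, each term of the triple satisfies $x_s^{l_s}\leq |h_s|\leq H$. Hence $x_s\leq H^{1/l_s}\leq H^{1/4}$ however different the three sizes are. Your inequality then becomes $H\ll H^{\frac34(1+\delta)(1+\epsilon)}$, and the exponent is $<1$ for small $\epsilon$. So $H$, and with it $x_i,x_j,x_m$, is bounded. The case $x_s=1$ is not delicate either: it forces $N(\eta_s)\leq 1^{\delta}=1$. This is exactly the paper's argument. It sets $x^l=\max\{x_i^{l_i},x_j^{l_j},x_t^{l_t}\}$, uses $x_s\leq (x^l)^{1/l_s}$ and $x^l\leq|\lambda_r\eta_rx_r^{l_r}|$, and obtains $x^l<c_6\,x^{l\cdot\frac{1763}{1764}(1+\delta)}$. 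You instead pick a triple whose largest terms are ``comparable to $|d|k$'' via van der Waerden, and that selection is never actually carried out.

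\textbf{Uniformity in $k$ fails.} You work with arbitrary indices $i<j<m$. So the coefficients $m-j$, $j-i$, $m-i$ and your gcd bound $(j-i)\gcd(h_0,h_1)$ are controlled only in terms of $k$. Your final step $|h_i|\leq |h_j|+k|d|$ brings in $k$ again. But $k$ is not bounded a priori. Bounding it is the content of Theorem 4, which needs this lemma to put all such progressions into one fixed $H_S$ before Theorem 1 can be applied. A $c_7$ depending on $k$ is therefore useless, and if $c_7$ could depend on the progression the lemma would be vacuous.

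\textbf{The fix.} The paper uses only nearby indices, $|i-j|\leq c(2)$ and $|i-t|\leq 2$. Then the $\lambda$'s and the gcd $D$ are bounded by constants depending only on $c(2)$ and $c_0'$. Under the lemma's hypothesis that all $l_i\geq 4$, you can simply use $h_i-2h_{i+1}+h_{i+2}=0$ for $i\leq k-3$ and $h_{k-3}-2h_{k-2}+h_{k-1}=0$ otherwise. This works because:
\begin{itemize}
\item every index lies in such a triple since $k\geq 3$;
\item the gcd of the three terms divides $2\gcd(h_0,h_1)$;
\item $1/l_i+1/l_j+1/l_t\leq 3/4$.
\end{itemize}
The van der Waerden and $2$-smoothness input is then unnecessary. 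The paper uses it only because it runs the argument assuming just one exponent is at least $7$.
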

		\begin{proof}
			We prove the statement in a slightly weaker form. Suppose we are given an arithmetic progression $h_0,...,h_{k-1}$ and that there is an $i\in \{0,...,k-1\}$ with $l_i\geq 7$. 
			Now there exists a $j$ with $0<|i-j|\leq c(2)$ such that $l_j\geq 3$. Because otherwise, there would be an $2$-homogeneous arithmetic progression of length $c(2)$ which is impossible. Choose any $t\in \{0,...,k-1\}\setminus \{i,j\}$ with $|i-t|\leq 2$. Then with some coprime non-zero integers $\lambda_i,\lambda_j, \lambda_t$ with $\mathrm{max}\{\lambda_i,\lambda_j,\lambda_t\}\leq |i-j|+2$ we have $\lambda_ih_i+\lambda_jh_j+\lambda_th_t=0$. But this gives us
			$$
			\lambda_i\eta_ix_i^{l_i}+\lambda_j\eta_jx_j^{l_j}+\lambda_t\eta_tx_t^{l_t}=0.
			$$
			Let $D$ be the greatest common divisor of the above terms. Since $\mathrm{gcd}(h_0,h_1)\leq c'_0$, we conclude $D$ is bounded. In what follows, we show that the $l_i$ are bounded. So let $r\in \{i,j,t\}$ be the index for which $|\lambda_r\eta_rx_r^{l_r}|$ is maximal among these three terms. Now apply the abc conjecture with $\epsilon=1/42$ to $a=\lambda_i\eta_ix_i^{l_i}$, $b=\lambda_j\eta_jx_j^{l_j}$ and $c=\lambda_t\eta_tx_t^{l_t}$. This yields
			$$
			|\lambda_r\eta_rx_r^{l_r}|<c_5N(\lambda_i\eta_ix_i^{l_i}\cdot\lambda_j\eta_jx_j^{l_j}\cdot \lambda_t\eta_tx_t^{l_t})^{43/42}.
			$$
			Since $\mathrm{max}\{\lambda_i,\lambda_j,\lambda_t\}\leq c_3+2$, we find
			$$
			|\lambda_r\eta_rx_r^{l_r}|<c_6N(\eta_ix_i^{l_i}\cdot\eta_jx_j^{l_j}\cdot\eta_tx_t^{l_t})^{43/42}.
			$$
			Now 
			$$
			N(\eta_ix_i^{l_i}\cdot\eta_jx_j^{l_j}\cdot\eta_tx_t^{l_t})\leq N(\eta_i)N(\eta_j)N(\eta_t)\cdot\underset{p\mid x_ix_jx_t}{\prod}p\leq N(\eta_i)N(\eta_j)N(\eta_t)\cdot x_ix_jx_t
			$$
			and since $N(\eta_i)\leq x_i^{\delta}$, we find
			$$
			N(\eta_ix_i^{l_i}\cdot\eta_jx_j^{l_j}\cdot\eta_tx_t^{l_t})\leq (x_ix_jx_t)^{1+\delta}.
			$$
			As $l_i\geq 7, l_j\geq 3$ and $l_t\geq 2$, we obtain $1/l_i+1/l_j+1/l_t<1-1/42$. Now we set $x^l=\mathrm{max}\{x_i^{l_i},x_j^{l_j}, x_t^{l_t}\}$. Then
			$$
			(x_ix_jx_t)^{(1+\delta)\frac{43}{42}}\leq x^{l(\frac{1}{l_i}+\frac{1}{l_j}+\frac{1}{l_t})(1+\delta)\frac{43}{42}}\leq x^{l\cdot\frac{1763}{1764}(1+\delta)}.
			$$
			This implies
			$$
			x^l\leq |\lambda_r\eta_rx_r^{l_r}|<c_6\cdot x^{l\cdot\frac{1763}{1764}(1+\delta)}.
			$$
			If $x^l=1$, then $x_i=x_j=x_t=1$ and therefore 
			$$
			\eta_i\leq |\lambda_i\eta_i|\leq |\lambda_r\eta_r|\leq c_6.
			$$
			This shows $N(\eta_i)<c_6$ if $x^l=1$. So we assume $x^l>1$.
			Since $\delta<1/1763$, we find that $x^l<c_7$. Hence $x_i^{l_i}<c_7$. This yields $N(\eta_i)<c_{7}$.\\
			\noindent
			\textbf{Proof of Theorem 4}\\
			\noindent
			From $N(\eta_i)<c_{7}$ it follows that there exists a suitable finite set $S$ of primes for which $G\subset H_S$. Now apply Theorem 1 to obtain the desired assertion. 
		\end{proof}
		\section{Proof of Theorems 5, 6 and 7}
		\noindent
		We prove Theorem 5. First we show that $l_i$ are bounded, using the abc conjecture. 
			\begin{lem}
			Let $c(2)$ be the constant corresponding to a $2$-homogeneous smooth arithmetic progression in $G_{!}$ from Lemma 2.2. Suppose the abc conjecture holds. Then there exists a $c_{11}$ such that if $h_0,...,h_{k-1}$ is any arithmetic progression in $G_{!}$ with $h_i=l_i!x_i^{l_i}$ such that $\mathrm{gcd}(h_0,h_1)<c_8$ and $k\geq 2c(2)$, then $l_i<c_{11}$ for all $i=0,...,k-1$.
		\end{lem}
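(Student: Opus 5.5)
We mimic the proof of Lemma 3.1, with the $l_i!$ now playing the role of $\eta_i$. The aim is to show that a large exponent forces the corresponding term, and hence the exponent itself, to be bounded.

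\emph{Step 1: choosing three terms.} Fix $i$ and suppose $l_i\geq 7$; if no such $i$ exists, every $l_i\le 6$ and there is nothing to prove. Since $k\geq 2c(2)$, the window of indices within distance $c(2)$ of $i$ contains at least $c(2)$ consecutive terms on one side of $i$. If all of them had $l_j=2$, they would form a $2$-homogeneous progression in $G_{!}$ of length at least $c(2)$, contradicting Lemma 2.2. Hence there is a $j$ with $0<|i-j|\le c(2)$ and $l_j\ge 3$. Pick a third index $t\notin\{i,j\}$ with $|i-t|\le 2$. Eliminating $h_0$ and $d$ from the three linear relations gives coprime integers $\lambda_i,\lambda_j,\lambda_t$, bounded in terms of $c(2)$, with $\lambda_ih_i+\lambda_jh_j+\lambda_th_t=0$. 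Divide by the gcd $D$ of the three terms. Because $\gcd(h_0,h_1)<c_8$, every $\gcd(h_a,h_b)$ divides $(b-a)\gcd(h_0,h_1)$, so $D$ is bounded.

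\emph{Step 2: applying abc.} Apply the abc conjecture with $\epsilon=1/42$, using $N(m!)\le\prod_{p\le m}p$ and $N(x^l)\le x$. This gives
$$
\max_{r\in\{i,j,t\}}|l_r!\,x_r^{l_r}| \;<\; c\,\Big(\prod_{r\in\{i,j,t\}}\textstyle\prod_{p\le l_r}p\Big)^{43/42}(x_ix_jx_t)^{43/42}.
$$
Write $L=\max(l_i,l_j,l_t)$. By Chebyshev, $\prod_{p\le m}p\le 4^m$, so the radical contributes at most $4^{3L\cdot 43/42}$. On the left, $l_r!$ grows like $(L/e)^L$, which beats any $C^L$. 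Write $X=\max_r x_r^{l_r}$. As in Lemma 3.1, $1/l_i+1/l_j+1/l_t\le 1/7+1/3+1/2<1-1/42$ gives $(x_ix_jx_t)^{43/42}\le X^{1763/1764}$.

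\emph{Step 3: the main obstacle.} The two maxima may sit on different terms: the largest factorial need not accompany the largest power. The plan is to let $r$ be the index maximising $|\lambda_r l_r!x_r^{l_r}|$. Then for every $s\in\{i,j,t\}$ we have $l_s!\,x_s^{l_s}\le C\,l_r!\,x_r^{l_r}$, so the left side dominates each $l_s!$ and each $x_s^{l_s}$ simultaneously. This yields
$$
\max_s (l_s!)\cdot X \;\ll\; \big(\max_s l_s!\cdot X\big)^{1/2}\,\cdot 4^{3.1L}X^{1763/1764}.
$$
More carefully, bound the left side by the geometric mean $(\max_s l_s!)^{1/2}X^{1/2}$ and compare $(L!)^{1/2}$ against $4^{3.1L}$. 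This forces $L!\ll C^{L}$, hence $L$ is bounded, and in particular $l_i<c_{11}$. In the degenerate case $X=1$ the same inequality directly bounds $l_r!$, and therefore $L$.

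The step I expect to need the most care is exactly this mixing of the factorial and power contributions. If the geometric-mean trick turns out to be too lossy, I would fall back on a case split: either $l_r!\ge X^{1/1764}$, in which case the factorial alone beats $4^{3.1L}$ and bounds $L$; or $X$ dominates, in which case the argument of Lemma 3.1 bounds $X$, and hence $x_i^{l_i}$. In the latter case $x_i\ge 2$ gives $l_i\le\log_2 X$, which is bounded, while $x_i=1$ puts us back in the first case.
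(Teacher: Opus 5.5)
Your Steps 1 and 2 match the paper: the same choice of $i,j,t$ via Lemma 2.2, bounded $\lambda$'s and gcd, $N(m!)\le 4^m$, and $1/7+1/3+1/2=41/42$. The gap is in Step 3, where neither the argument you propose nor its fallback closes.

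\textbf{The geometric-mean step fails.} Bounding the left side from below by $(L!)^{1/2}X^{1/2}$ gives $(L!)^{1/2}X^{1/2}\ll 4^{3.1L}X^{1763/1764}$. Since $1763/1764>1/2$, the factor $X^{1763/1764-1/2}$ left on the right is unbounded, so nothing forces $L!\ll C^L$. Your displayed inequality with $\max_s(l_s!)\cdot X$ on the left is also unjustified. The largest factorial and the largest power may sit on different terms, so $H$ need not dominate their product.

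\textbf{The case split does not repair this.} If $l_r!\ge X^{1/1764}$, you only get $L!\le H\ll 4^{3.1L}(l_r!)^{1763}$. That is no contradiction, and $l_r$ need not equal $L$. In the other branch, ``the argument of Lemma 3.1'' does not bound $X$ absolutely. In Lemma 3.1 the radical of the coefficient was absorbed via $N(\eta)\le x^{\delta}$, but here it contributes a factor $4^{3.1L}$. So you only get $X\ll C^L$, which makes $l_i\le\log_2 X$ circular. The claim that $x_i=1$ ``puts us back in the first case'' has no justification.

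\textbf{The fix is a fact you already stated but did not use.} With $H=\max_s|\lambda_s l_s!x_s^{l_s}|$, you have both $X\le H$ and $L!\le H$, since $x_s\ge 1$. Apply the first on the right-hand side instead of taking a geometric mean on the left: $(x_ix_jx_t)^{43/42}\le H^{1763/1764}$. Then abc reads $H\ll 4^{3.1L}H^{1763/1764}$, i.e.\ $H\ll C^{L}$ for an absolute constant $C$. Now $L!\le H\ll C^L$ together with $L!>(L/e)^L$ bounds $L$.

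This is exactly the paper's proof. It writes $x_s\le (H/l_s!)^{1/l_s}$ to get $H<c_9\,4^{3l(1+\epsilon)}H^{\frac{41}{42}(1+\epsilon)}$, then uses $l!\le H$. No case split is needed, and the ``two maxima on different terms'' issue you flagged as the main obstacle never arises.
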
 
		\begin{proof}
				Suppose we are given an arithmetic progression $h_0,...,h_{k-1}$ and take any $i\in \{0,...,k-1\}$ with $l_i\geq 7$. Notice that if no such $i$ exists, the lemma follows with $c_{11}=7$. 
				Now there exists a $j$ with $0<|i-j|\leq c(2)$ such that $l_j\geq 3$. Because otherwise, there would be an $2$-homogeneous arithmetic progression of length $c(2)$ which is impossible. Choose any $t\in \{0,...,k-1\}\setminus \{i,j\}$ with $|i-t|\leq 2$. Then with some coprime non-zero integers $\lambda_i,\lambda_j, \lambda_t$ with $\mathrm{max}\{\lambda_i,\lambda_j,\lambda_t\}\leq |i-j|+2$ we have $\lambda_ih_i+\lambda_jh_j+\lambda_th_t=0$. But this gives us
					$$
				\lambda_il_i!x_i^{l_i}+\lambda_jl_j!x_j^{l_j}+\lambda_tl_t!x_t^{l_t}=0.
				$$
				Let $D$ be the greatest common divisor of the above terms. Since $\mathrm{gcd}(h_0,h_1)\leq c_8$, we conclude $D$ is bounded. So let $r\in \{i,j,t\}$ be the index for which $|\lambda_rl_r!x_r^{l_r}|$ is maximal among these three terms. Now apply the abc conjecture with to $a=\lambda_il_i!x_i^{l_i}$, $b=\lambda_jl_j!x_j^{l_j}$ and $c=\lambda_tl_t!x_t^{l_t}$. This yields
					$$
				|\lambda_rl_r!x_r^{l_r}|<c_9N(l_i!x_i^{l_i}\cdot l_j!x_j^{l_j}\cdot l_t!x_t^{l_t})^{1+\epsilon}.
				$$
				We set $H=|\lambda_rl_r!x_r^{l_r}|$ and let $l=\mathrm{max}\{l_i,l_j,l_t\}$. 
				Since $N(n!)\leq 4^n$, we obtain
				$$
				N(l_i!x_i^{l_i}\cdot l_j!x_j^{l_j}\cdot l_t!x_t^{l_t})\leq 4^{3l}\cdot x_ix_jx_t. 
				$$
				Notice that
				$$
				x_i\leq \left(\frac{H}{l_i!}\right)^{\frac{1}{l_i}}, \ x_j\leq \left(\frac{H}{l_j!}\right)^{\frac{1}{l_j}}, \ x_t\leq \left(\frac{H}{l_t!}\right)^{\frac{1}{l_t}}.
				$$
				And as $l_i\geq 7, l_j\geq 3$ and $l_t\geq 2$, we have
				$$
				\frac{1}{l_i}+\frac{1}{l_j}+\frac{1}{l_t}<1-\frac{1}{42},
				$$
				yielding
				$$
			H<c_94^{3l(1+\epsilon)}\cdot H^{(\frac{1}{l_i}+\frac{1}{l_j}+\frac{1}{l_t})(1+\epsilon)}<c_94^{3l(1+\epsilon)}\cdot H^{\frac{41}{42}(1+\epsilon)}
				$$
				Choosing for instance $\epsilon <1/50$ gives 
				$$
				H^{\delta}<c_{10}c'^l
				$$
				for a $\delta>0$ and a suitable positive constant $c'$. Now since $l!<H$, we find
				$$
				l!<c_{10}c'^l.
				$$
				Using $(l/e)^l<l!$, we find that $l$ must be bounded. 
		\end{proof}
		\noindent
		Now, since $l_i$ is bounded, we can use Theorem 2.4 to conclude that there are only finitely many solutions if $k\geq 3$ and $l_i\geq 4$. Since an arithmetic progression of length $>5$ contains an arithmetic progression of length $5$, we only have to consider the cases $k=3,4,5$. We know from Lemma 4.1 that $l_i< c_{11}$. Hence we only need to prove finiteness for a given exponent vector $(l_0,...,l_{k-1})$. Without loss of generality, $l_i$ can be taken to be $l_i$-th power free. Note that $\mathrm{gcd}(h_0,h_1)\leq c_8$, we also have $\mathrm{gcd}(x_i,x_j)\leq c_8$. So we can define $S$ to be the set of all primes up to $c_8$. We write $d=h_1-h_0$. The theorem will be proved if we show that the following system of equations has only finitely many solutions:
		\begin{itemize}
			\item [(a)] $l_i!x_i^{l_i}-l_j!x_j^{l_j}=(i-j)d$ for all $0\leq i<j\leq k-1$.
			\item [(b)] $(x_0,...,x_{k-1})\in \mathbb{Z}^k$ with $\mathrm{gcd}(x_0,x_1)\leq c_8$.
		\end{itemize}
		 So we need to solve
		 $$
		 (j-m)l_i!x_i^{l_i}+(m-i)l_jx_j^{l_j}+(i-j)l_m!x_m^{l_m}=0
		 $$
		 for all $0\leq m,i,j\leq k-1$. For $m=0, i=1$, we obtain that each of the solutions would give rise to a solution to the equation
		 $$
		 jl_1!x_1^{l_1}-l_j!x_j^{l_j}+(1-j)l_0!x_0^{l_0}=0.
		 $$
		 Now we apply Theorem 2.4 to this equation. According to Theorem 2.4, solutions give rise to $K_j$-rational points on some algebraic curve $C_j$ over some number field $K_j$. Putting
		 $$
		 u=\frac{l_1!x_1^{l_1}}{l_0!x_0^{l_0}},
		 $$
		 we obtain that $C_j$ is a Galois-cover of the $u$-line. with ramification indices $l_0,l_1,l_j$ over $u=\infty, 0, j/(j-1)$. If $k=3$, then $j=2$ and we see that Theorem 2.4 immediately implies that if $1/l_0+1/l_1+1/l_2<1$, the curve $C_2$ has genus larger than $1$. Thus by Falting's theorem, $C_2$ has only finitely many rational points.\\
		 \noindent
		 If $k=4$, we look at the solutions to 
		 $$
		 jl_1!x_1^{l_1}-l_j!x_j^{l_j}+(1-j)l_0!x_0^{l_0}=0
		 $$
		 for $j=2,3$ simultaneously. Let $M$ be a number field containing both $K_2$ and $K_3$. The solutions we are interessted in correspond to the rational points on the fibre product $C_2\times_{u} C_3$. This fibre product is Galois over the $u$-line and has ramification indices at least $l_0,l_1,l_2,l_3$ over $u=\infty, 0, 2, \frac{3}{2}$. All of the connected components have the same degree $d$. The Riemann-Hurwitz formula gives us for the genus of a connected component
		 $$
		 2g-2\geq d\left(2-\frac{1}{l_0}-\frac{1}{l_1}-\frac{1}{l_2}-\frac{1}{l_3}\right).
		 $$
		 We see that $g\geq 1$ only if $l_0=l_1=l_2=l_3=2$.  For all other cases, we find $g\geq 2$. And again by Falting's theorem there are only finitely many rational points in these cases. The case $k=5$ can be proved in a similar way. In this situation, we consider $C_2\times_uC_3\times_uC_4$ with ramification indices at lest $l_0,l_1,l_2,l_3,l_4$ over $u=0, \infty, 1, \frac{3}{2}, \frac{4}{3}$. In this case we allways get $g\geq 2$. \\
		 \noindent
		 \textbf{Proof of Theorem 6}
		 \noindent\\
		 Consider $\Pi=h_0\cdots h_{k-1}=f(2)^kx^2$, where $x=x_0\cdots x_{k-1}$. Now $P(f(2)^k)=P(f(2))<k$. And we know from a theorem of Shory and Tijdeman \cite{SHT} that $k$ is bounded by a positive constant depending only on $\omega(d)$. Since $\omega(d)<a''$, $k<c'(a'',2)$. Essentially, $k<c'(2)$. Now we prove the following lemma. Notice that it does not assume $l_i\geq 4$ for all $i$.
		 \begin{lem}
		 		Let $c'(2)$ be the constant corresponding to a $2$-homogeneous smooth arithmetic progression in $G_f$ from above. Suppose the abc conjecture holds. Then there exists a $c_{15}$ such that if $h_0,...,h_{k-1}$ is any arithmetic progression in $G_{f}$ with $h_i=f(l_i)!x_i^{l_i}$ such that $\mathrm{gcd}(h_0,h_1)<c_{12}$ and $k\geq 2c'(2)$, then $l_i<c_{15}$ for all $i=0,...,k-1$.
		 \end{lem}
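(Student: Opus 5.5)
The plan is to copy the proof of Lemma 4.1, replacing $l!$ by $f(l)$ and the bound $N(n!)\leq 4^n$ by the hypothesis $N(f(n))^{1+\gamma}=o(f(n))$ from Theorem 6. (The term is written $f(l_i)!x_i^{l_i}$ in the statement, but by the definition of $G_f$ I read it as $h_i=f(l_i)x_i^{l_i}$.)

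Take any $i$ with $l_i\geq 7$; if there is none, we may take $c_{15}=7$. Since $k\geq 2c'(2)$ and $2$-homogeneous progressions in $G_f$ have fewer than $c'(2)$ terms, there is a $j$ with $0<|i-j|\leq c'(2)$ and $l_j\geq 3$. Choose $t\notin\{i,j\}$ with $|i-t|\leq 2$. This gives a relation
$$
\lambda_if(l_i)x_i^{l_i}+\lambda_jf(l_j)x_j^{l_j}+\lambda_tf(l_t)x_t^{l_t}=0
$$
with coprime $\lambda$'s, each bounded by $c'(2)+2$. Because $\mathrm{gcd}(h_0,h_1)<c_{12}$, the gcd of the three terms is bounded, so we can divide it out and apply abc. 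Let $H$ be the largest of the three absolute values and $l=\max\{l_i,l_j,l_t\}$. Abc then gives
$$
H<c\,N(f(l_i)f(l_j)f(l_t))^{1+\epsilon}(x_ix_jx_t)^{1+\epsilon}.
$$

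Next I bound the radical of the coefficients. The divisibility hypothesis $f(m_1)\mid f(m_2)$ for $m_1\leq m_2$ gives $N(f(l_i)f(l_j)f(l_t))=N(f(l))$. For the $x$'s, use $x_s\leq (H/|f(l_s)|)^{1/l_s}\leq H^{1/l_s}$, which needs $f(l_s)\neq0$. Combined with $1/l_i+1/l_j+1/l_t\leq 41/42$ and $\epsilon$ small, this gives
$$
H^{\kappa}<c\,N(f(l))^{1+\epsilon}\quad\text{for some }\kappa>0.
$$

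The main obstacle is getting a contradiction from this, because $H$ is only bounded below by $|f(l)|$. The naive route gives $|f(l)|^{\kappa}<cN(f(l))^{1+\epsilon}$, and the hypothesis alone does not contradict that when $\kappa$ is small. So I will keep more of the $x$-contribution. Let $r$ be the index with $l_r=l$. Then $x_r^{l}\leq H/|f(l)|$, so the factor $x_r^{1+\epsilon}$ contributes at most $(H/|f(l)|)^{(1+\epsilon)/l}$. The other two $x$'s contribute at most $H^{(1/3+1/2)(1+\epsilon)}$.

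Even so, an exponent gap of size roughly $1/6$ remains unless $x_r$ is large, so the argument has to compare $|f(l)|$ with $N(f(l))^{1+\gamma}$ and choose $\epsilon$ with $(1+\epsilon)/\kappa\leq 1+\gamma$. My first attempt is to arrange the abc application so that the effective exponent on $H$ is close to $1$. Concretely, I would note that $N(f(l))$ is bounded by the radical of the whole triple, and use the hypothesis $N(f(n))^{1+\gamma}=o(f(n))$ together with $H\geq|f(l)|$ to conclude that $l$ is bounded, taking $\epsilon<\gamma$ and using $7\leq l$ to make the $x$-exponents small. If this inequality chase does not close, the fallback is to strengthen the requirement $l_i\geq 7$ to $l_i\geq L_0$ for a suitable $L_0$ depending on $\gamma$. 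That makes $\sum 1/l_s$ closer to the needed threshold, and the lemma still holds with $c_{15}=\max(L_0,\cdot)$. Finiteness then follows because $o(f(n))$ eventually beats any fixed power relation of this kind.
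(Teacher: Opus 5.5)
There is a genuine gap, at the decisive step. Your setup (the choice of $i,j,t$, the bounded-coefficient relation, abc, and $N(f(l_i)f(l_j)f(l_t))=N(f(l))$) is fine, and you correctly locate the real obstacle, but you do not overcome it.

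Even with your refined bookkeeping, abc together with $H\ge |f(l)|$ only yields $|f(l)|^{1-(1+\epsilon)(1/l_j+1/l_t)}\le c\,N(f(l))^{1+\epsilon}$ when the largest exponent is $l_i$. The construction guarantees only $l_j\ge 3$ and $l_t\ge 2$, so the exponent on the left can be as small as about $1/6$, however large $l_i$ is. The hypothesis $N(f(n))^{1+\gamma}=o(f(n))$ rules this out only when $\gamma$ exceeds about $5$.

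Your proposed fixes do not help:
\begin{itemize}
\item Raising the threshold $l_i\ge 7$ to $l_i\ge L_0$ does nothing to $1/l_j+1/l_t$, so $\sum 1/l_s$ stays near $5/6$.
\item ``Using $7\le l$ to make the $x$-exponents small'' fails for the same reason.
\item The closing claim that $o(f(n))$ ``beats any fixed power relation'' is false: $N^{1+\gamma}=o(f)$ does not give $N^{A}=o(f)$ for $A>1+\gamma$.
\end{itemize}
For instance, $f(n)=(\prod_{p\le n}p)^2$ satisfies the divisibility condition and the growth hypothesis with $\gamma=1/2$. Yet $|f(l)|^{1/6}\le N(f(l))$ for every $l$, so the inequality you reach carries no information for this $f$.

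The paper's proof runs along exactly your route and breaks at exactly this point. It derives $H^{\delta}<c_{14}N(f(l))^{1+\epsilon}$, with $\delta=1-\frac{41}{42}(1+\epsilon)<1/42$. Then, from $f(l)<H$, it writes $f(l)<c_{14}N(f(l))^{1+\epsilon}$, silently dropping the exponent $\delta$. So the paper does not supply the missing idea either.

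The same slip is harmless in Lemma 4.1. There $N(l!)\le 4^{l}$ and $l!\ge (l/e)^l$, so $(l!)^{\delta}<c\,c'^{\,l}$ already bounds $l$. For general $f$, closing this argument requires one of two things. One option is a much stronger growth hypothesis, such as $N(f(n))^{A}=o(f(n))$ for every $A$, or at least $\gamma>5$ with your refined estimate. The other is a genuinely new ingredient: a bounded-coefficient relation among three terms whose exponents are all large, so that $\sum 1/l_s$ is small. The $2$-smoothness argument alone does not provide such a relation.
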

		 \begin{proof}
		 	Suppose we are given an arithmetic progression $h_0,...,h_{k-1}$ and take any $i\in \{0,...,k-1\}$ with $l_i\geq 7$. Notice that if no such $i$ exists, the lemma follows with $c_{15}=7$. 
		 	Now there exists a $j$ with $0<|i-j|\leq c'(2)$ such that $l_j\geq 3$. Because otherwise, there would be an $2$-homogeneous arithmetic progression of length $c'(2)$ which is impossible. Choose any $t\in \{0,...,k-1\}\setminus \{i,j\}$ with $|i-t|\leq 2$. Then with some coprime non-zero integers $\lambda_i,\lambda_j, \lambda_t$ with $\mathrm{max}\{\lambda_i,\lambda_j,\lambda_t\}\leq |i-j|+2$ we have $\lambda_ih_i+\lambda_jh_j+\lambda_th_t=0$. But this gives us
		 	$$
		 	\lambda_if(l_i)x_i^{l_i}+\lambda_jf(l_j)x_j^{l_j}+\lambda_tf(l_t)x_t^{l_t}=0.
		 	$$
		 	Let $D$ be the greatest common divisor of the above terms. Since $\mathrm{gcd}(h_0,h_1)\leq c_{12}$, we conclude $D$ is bounded. So let $r\in \{i,j,t\}$ be the index for which $|\lambda_rf(l_r)x_r^{l_r}|$ is maximal among these three terms. Now apply the abc conjecture with to $a=\lambda_if(l_i)x_i^{l_i}$, $b=\lambda_jf(l_j)x_j^{l_j}$ and $c=\lambda_tf(l_t)x_t^{l_t}$. This yields
		 	$$
		 	|\lambda_rf(l_r)x_r^{l_r}|<c_{13}N(f(l_i)x_i^{l_i}\cdot f(l_j)x_j^{l_j}\cdot f(l_t)x_t^{l_t})^{1+\epsilon}.
		 	$$
		 	We set $H=|\lambda_rf(l_r)x_r^{l_r}|$ and let $l=\mathrm{max}\{l_i,l_j,l_t\}$. 
		 	From the assumption on the function $f$, we obtain
		 	$$
		 	N(f(l_i)x_i^{l_i}\cdot f(l_j)x_j^{l_j}\cdot f(l_t)x_t^{l_t})\leq N(f(l))\cdot x_ix_jx_t. 
		 	$$
		 	Notice that
		 	$$
		 	x_i\leq \left(\frac{H}{f(l_i)}\right)^{\frac{1}{l_i}}, \ x_j\leq \left(\frac{H}{f(l_j)}\right)^{\frac{1}{l_j}}, \ x_t\leq \left(\frac{H}{f(l_t)}\right)^{\frac{1}{l_t}}.
		 	$$
		 	And as $l_i\geq 7, l_j\geq 3$ and $l_t\geq 2$, we have
		 	$$
		 	\frac{1}{l_i}+\frac{1}{l_j}+\frac{1}{l_t}<1-\frac{1}{42},
		 	$$
		 	yielding
		 	$$
		 	H<c_9N(f(l))^{(1+\epsilon)}\cdot H^{(\frac{1}{l_i}+\frac{1}{l_j}+\frac{1}{l_t})(1+\epsilon)}<c_9N(f(l))^{(1+\epsilon)}\cdot H^{\frac{41}{42}(1+\epsilon)}
		 	$$
		 	Choosing for instance $\epsilon <1/50$ gives 
		 	$$
		 	H^{\delta}<c_{14}N(f(l))^{(1+\epsilon)}
		 	$$
		 	for a fixed $\delta>0$. Now since $f(l)<H$, we find
		 	$$
		 f(l)<c_{14}N(f(l))^{(1+\epsilon)}.
		 	$$
		 	Since 
		 	$$
		 	N(f(l))^{(1+\epsilon)}<N(f(l))^{(1+\gamma)},
		 	$$
		 	we conclude $N(f(l))^{1+\epsilon}=o(f(l))$ as $l\rightarrow \infty$, whenever $\epsilon<\gamma$. So we can choose $\epsilon$ such that $\epsilon <1/50$ and $\epsilon <\gamma$. This shows that $l$ must be bounded. 
		 \end{proof}
		 \noindent
		 To prove Theorem 6, use Theorem 2.4 and proceed as in the second part of the proof of Theorem 5.

		 \noindent
		 \textbf{Proof of Theorem 7}\\
		 \noindent
		 We sketch the argument only for $G_f$. Since $l_i$ is bounded by assumption, we can use Theorem 2.4 to conclude that there are only finitely many solutions if $k\geq 3$ and $l_i\geq 4$. Since an arithmetic progression of length $>5$ contains an arithmetic progression of length $5$, we only hav to consider the cases $k=3,4,5$. Hence we only need to prove finiteness for a given exponent vector $(l_0,...,l_{k-1})$. Without loss of generality, $f(l_i)$ can be taken to be $l_i$-th power free. Note that $\mathrm{gcd}(h_0,h_1)\leq D$, we also have $\mathrm{gcd}(x_i,x_j)\leq D$. So we can define $S$ to be the set of all primes up to $D$. We write $d=h_1-h_0$. The theorem will be proved if we show that the following system of equations has only finitely many solutions:
		 \begin{itemize}
		 	\item [(a)] $f(l_i)x_i^{l_i}-f(l_j)x_j^{l_j}=(i-j)d$ for all $0\leq i<j\leq k-1$.
		 	\item [(b)] $(x_0,...,x_{k-1})\in \mathbb{Z}^k$ with $\mathrm{gcd}(x_0,x_1)\leq D$.
		 \end{itemize}
		 So we need to solve
		 $$
		 (j-m)f(l_i)x_i^{l_i}+(m-i)f(l_j)x_j^{l_j}+(i-j)f(l_m)x_m^{l_m}=0
		 $$
		 for all $0\leq m,i,j\leq k-1$. For $m=0, i=1$, we obtain that each of the solutions would give rise to a solution to the equation
		 $$
		 jf(l_1)x_1^{l_1}-f(l_j)x_j^{l_j}+(1-j)f(l_0)x_0^{l_0}=0.
		 $$
		 Now we apply Theorem 2.5 to this equation and conclude finiteness in the desired cases in the same way as in the second part of the proof of Theorem 5.

		\vspace{0.3cm}
		\noindent
		{\tiny HOCHSCHULE FRESENIUS UNIVERSITY OF APPLIED SCIENCES 40476 D\"USSELDORF, GERMANY.}\\
		E-mail adress: sasa.novakovic@hs-fresenius.de\\
		

\begin{thebibliography}{999}
			\bibitem{BGH} M.A. Bennett, K. Gy\H{o}ry an L. Hajdu, Powers from priducts of censecutive terms in arithmetic progression. J. reine Angew. Math. 
			\bibitem{BGHT} N. Bruin, K. Gy\H{o}ry, L. Hajdu and Sz. Tengely, Arithmetic progressions consisting of unlike powers. Indag. Math. 18 (2006). 539-555. 
			\bibitem{DG} H. Darmon and A. Granville, On the equations $z^m=F(x,y)$ and $Ax^p+By^q=Cz^r$. Bull. London Math. Soc. 27 (1995), 882-885.
			\bibitem{DM} H. Darmon and L. Merel, Winding quotients and some variants of Fermat's Last Theorem. J. reine Angew. Math. 490 (1997), 81-100.
			\bibitem{GY} K. Gy\H{o}ry, Power values of products of consecutive integers and binomial coefficients, Number Theory and Its Applications. Kluwer Acad. Publ. (1999), 145-156.
			\bibitem{GY2}, K. Gy\H{o}ry, L. Hajdu and N. Saradha, On the diophantine equation $n(n+d)...(n+(k-1)d)=by^l$, Canad. Math. J. 
			\bibitem{HAY} L. Hajdu, Perfect powers in arithmetic progression. A note on the inhomogeneous case. Acta Arith. 113 (2004), 343-349.
			
			\bibitem{SS} N. Saradha and T.N. Shorey, Contributions towards a conjecture of Erd\H{o}s on perfect powers in arithmetic progression. J. reine Angew. Math.
			\bibitem{SHO} T.N. Shorey, Exponential diophantine equations invilving products of censecutive integers and related equations. Number Theory (R.P. Bambach, V.C. Dumir and R.J. Hans-Gill, eds), Hindustan Book Agency (1999), 463-495.
			\bibitem{SHO2} T.N. Shorey, Powers in arithmetic progression, A Panorama in Number Theory (G. W\"{u}stholz, ed.), Cambridge University Press, Cambridge (2002), 325-336.
			\bibitem{SHT} T.N. Shorey and R. Tijdeman, Perfect powers in products of terms in an arithmetical progression.Compositio Math. 75 (1990), 307-344.
			\bibitem{TI} R. Tijdeman, Diophantine equations and diophantine approximations, Number Theory and Applications (R.A. Mollin, ed.), Kluwer Acad. Press (1989), 215-243.
			\bibitem{VW} B.L. van der Waerden, Beweis einer Baudetschen Vermutung, Nieuw Archief voor Wiskunde 19 (1927), 212-216.
			
		\end{thebibliography}
	\end{document}